\documentclass[12pt, a4paper]{article}
\usepackage[english]{babel}

\usepackage{commath}
\usepackage{amsmath,amssymb}
\usepackage{amsthm}
\usepackage{booktabs}
\usepackage{bookmark}
\usepackage{xcolor}
\usepackage{afterpage}
\usepackage[thinlines]{easytable}
\usepackage{array}
\usepackage{mathtools} 
\usepackage{fancyhdr}
\usepackage{graphicx}
\usepackage{enumitem} 
\usepackage{cite}
\usepackage{mathrsfs}
\usepackage[export]{adjustbox}
\usepackage{setspace}
\usepackage{emptypage}
\usepackage[margin=2.8cm]{geometry} 
\graphicspath{ {./images/} }
\usepackage[utf8]{inputenc}
\usepackage{epigraph}
\usepackage[nottoc,numbib]{tocbibind} 
\usepackage{sectsty}
\usepackage{thmtools}
\usepackage{theoremref}

\usepackage[numbers,sort&compress]{natbib}
\bibpunct[, ]{[}{]}{,}{n}{,}{,}

\theoremstyle{plain}
\newtheorem{theorem}{Theorem}
\newtheorem{lemma}{Lemma}

\newtheorem{proposition}{Proposition}

\theoremstyle{definition}

\newtheorem{example}{Example}
\newtheorem{assumption}{Assumption}

\theoremstyle{remark}

\newtheoremstyle{colon}%
{}
{}
{\itshape}
{}
{\bfseries}
{:}
{ }
{}

\theoremstyle{colon}

\usepackage{setspace}
\providecommand{\keywords}[1]{\textbf{{Keywords: }} #1}

\begin{document}

\title{A Counterexample to Small-time Limit Theorems for Stochastic Processes \footnote{This paper was originally published in Sparago, P. \textit{"A Counterexample to Small-Time Limit Theorems for Stochastic Processes"}, Theory of Probability \& Its Applications, Vol. 71, Iss. 1 (2026) \url{https://doi.org/10.1137/S0040585X97T992847}. This version contains minor corrections to: (i) premise to Lemma \ref{conv_stopping_BM} at the beginning of Section \ref{stopped_paragraph}; (ii) second part of the proof of Lemma \ref{martingale_surface_invariance_langevin_NEW}.}}

\author{{Pietro Maria Sparago$^1$}
}

\date{%
	$^1$London School of Economics\\
	\today
}

\maketitle
	
	\begin{abstract}
		
		The standard small-time functional central limit theorem of semimartingales has been established in \cite{Gerholdetal}, proving that the scaling limit law of a large class of stochastic processes in increasingly small time scales is that of a Brownian motion with a possibly nontrivial variance-covariance matrix. In this paper we focus on the time-homogeneous diffusion processes described by It\^{o} SDEs. Instead of the simple time scaling $1/n$ of \cite{Gerholdetal} we consider the scaled processes stopped at the first exit times from the balls of decreasing radius $n^{-1/2}$ without scaling time itself. To the best of our knowledge, this particular scaling has not been investigated in the literature. We prove that this is a nontrivial example of a sequence of processes which converges in the sense of finite-dimensional distributions over a dense subset of $[0,\infty)$, but it does not converge weakly in the sense of laws of c\`{a}dl\`{a}g processes. We also characterise the limit law of the scaled processes evaluated at their respective first exit times.
		
	\end{abstract}

	\keywords{
		It\^{o}'s diffusion; small-time limit theorem; finite-dimensional distributions; weak convergence; stopping time; counterexample.}

	\section{Introduction and main results}
	
	Throughout this paper, we will consider $f \in \mathcal{C}^2(\mathbb{R}^d;\mathbb{R}^\ell)$, that is a vector-valued function $f=(f_1,...,f_\ell)$ {such that} each of its components $f_k:\mathbb{R}^d\to \mathbb{R}$ is twice continuously differentiable. The gradient of $f_k$ is denoted with $\nabla f_k$, the Jacobian matrix of $f$ is denoted with $J_f$ and the Hessian matrix of $f_k$ is denoted with $H_{f_k}$. Also, let $(\Omega,\mathscr{F},(\mathscr{F}_t)_{t\geq 0},\mathbf{P})$ be a filtered probability space where the filtration $(\mathscr{F}_t)_{t\geq 0}$ satisfies the usual conditions of being increasing, right-continuous and $\mathbf{P}$-complete. Suppose that there is a standard, $\mathbb{R}^d$-valued $(\mathscr{F}_t)_{t\geq 0}$-Brownian motion $W$ defined on such filtered probability space. We shall denote with $X$ the unique strong solution of the stochastic differential equation
	$$
	X=x+\int_0^.\mu(X_s)ds+\int_0^.\sigma(X_s)dW_s
	$$
	where $x \in \mathbb{R}^d$ and $\mu:\mathbb{R}^d\to \mathbb{R}^d,\,\sigma: \mathbb{R}^d\to \mathbb{R}^{d\times d}$ satisfy the local Lipschitz and linear growth condition. {While the local Lipschitz condition alone yields strong uniqueness of the solution, the linear growth condition guarantees that $X$ is non-explosive (or conservative) so that $\mathbf{P}$-almost surely (a.s.) $X$ has continuous paths; these results can be found, for example, in (\cite{IkedaWatanabe}; Th.3.1 p. 178 and p. 215).} We will work under a diffusivity assumption on the matrix $\sigma$:
	
	\begin{assumption} 
		\label{diffu}
		We assume that
		$$\inf_{y:\|y-x\|\leq 1}(\sigma\sigma')_{k,k}(y)>0$$
		for some $k\in \{1,...,d\}$.
	\end{assumption}
	
	We shall then focus on the sequence of scaled stopped processes
	$$
	(n^{1/2}(f(X_{\tilde{\tau}^n(X)\wedge t})-f(x)))_{t\geq 0},\quad n \in \mathbb{N}
	$$
	where $\tilde{\tau}^n(X):=\inf\{s:\|X_{s}-x\|=n^{-1/2}\}$. Note that time $t$ is not itself scaled. Let $\tau^r(y):=\inf\{t:\sup_{s\leq t}\|y_s\|\geq r\}$ be the first exit time functional. We will prove a convergence result for the finite-dimensional distributions of such sequence. {We will use well-established notation for weak convergence: $\to^\textnormal{w}$ in the case of (laws of) $\mathbb{R}^k$-valued random variables, while the symbol $\Rightarrow$ will be used to indicate weak convergence in the sense of laws of stochastic processes on the path space. We will expand on the latter's framework in the following section.} Our main result is a nontrivial example of a sequence of stochastic processes converging in the finite-dimensional distributions sense, but not weakly - thus lacking tightness.
	
	\begin{theorem} 
		\label{stopped_langevin_fdd_NEW}	
		The sequence of processes $(n^{1/2}(f(X_{\tilde{\tau}^n(X)\wedge .})-f(x)))_{n \in \mathbb{N}}$ converges in the sense of finite-dimensional distributions over any finite collection $\{t_1,t_2,...,t_m\}\subset (0,\infty)$. In particular, for any $\{t_1,t_2,...,t_m\}\subset (0,\infty)$ and $\varphi:\mathbb{R}^{m\times \ell
		}\to \mathbb{R}$ continuous and bounded we have
		$$
		\begin{aligned}
		&|\mathbf{E}[\varphi(n^{1/2}(f(X_{\tilde{\tau}^n(X)\wedge t_1})-f(x)),...,n^{1/2}(f(X_{\tilde{\tau}^n(X)\wedge t_m})-f(x)))]\\
		&-\mathbf{E}[\varphi(J_f(x)\sigma(x)W_{\tau^1(\sigma(x)W)},...,J_f(x)\sigma(x)W_{\tau^1(\sigma(x)W)})]|\,{ \to }\,0
		\end{aligned}
		$$
		{as $n\to \infty$.} However, in general the sequence of processes $(n^{1/2}(f(X_{\tilde{\tau}^n(X)\wedge.})-f(x)))_{n \in \mathbb{N}}$ does not converge weakly to the process $(J_f(x)\sigma(x)W_{\tau^1(\sigma(x)W)})_{t\geq 0}$ - which is a continuous-path process taking the value $J_f(x)\sigma(x)W_{\tau^1(\sigma(x)W)}$ at all $t\geq 0$.
	\end{theorem}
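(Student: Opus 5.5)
The plan is to reduce everything to a statement about a single rescaled diffusion run on its natural time scale, and then read off the fixed-$t$ values from its exit position. Set $Y^n_s:=n^{1/2}(X_{s/n}-x)$ and $B^n_s:=n^{1/2}W_{s/n}$. Each $B^n$ is a standard Brownian motion, and $Y^n$ solves
$$
Y^n=\int_0^\cdot n^{-1/2}\mu(x+n^{-1/2}Y^n_s)\,ds+\int_0^\cdot \sigma(x+n^{-1/2}Y^n_s)\,dB^n_s .
$$
By construction $\tilde\tau^n(X)=\tau^1(Y^n)/n$ and $n^{1/2}(X_{\tilde\tau^n(X)}-x)=Y^n_{\tau^1(Y^n)}$.

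\textbf{Step 1.} I will show that $Y^n$ stopped at $\tau^1(Y^n)$ converges in law, on path space, to $\sigma(x)B$ stopped at $\tau^1(\sigma(x)B)$, where $B$ is a Brownian motion. Before the exit time, $Y^n$ stays in the unit ball, so $x+n^{-1/2}Y^n$ stays within $n^{-1/2}$ of $x$. Continuity of $\sigma$ at $x$ and local boundedness of $\mu$ then let a BDG/Doob estimate give
$$
\sup_{s\le T\wedge\tau^1(Y^n)}\Big\|Y^n_s-\sigma(x)B^n_s\Big\|\to 0
$$
in $L^2$. It remains to transfer this to exit times. The exit time and the exit position are continuous functionals at $\mathbf{P}$-a.e.\ path of $\sigma(x)B$, for two reasons:
\begin{itemize}
\item By Assumption \ref{diffu}, the $k$-th coordinate of $\sigma(x)B$ is a nondegenerate Brownian motion. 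Hence $\tau^1(\sigma(x)B)<\infty$ a.s., and in fact it has all moments.
\item By the strong Markov property and the law of the iterated logarithm for that coordinate, the path exits the closed unit ball immediately after first touching the sphere. So the "touch but not cross" pathology has probability zero.
\end{itemize}
Comparing $Y^n$ with the Brownian motion $\sigma(x)B^n$ on the same space, I then get $\tau^1(Y^n)\to\tau^1(\sigma(x)B)$ in probability and $Y^n_{\tau^1(Y^n)}-\sigma(x)B^n_{\tau^1(\sigma(x)B^n)}\to0$ in probability. I expect this continuity-of-exit argument to be the main obstacle, because $\sigma(x)$ may be degenerate. Nondegeneracy holds only in direction $k$, so the regularity of the sphere for the possibly degenerate process has to be argued through that one coordinate.

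\textbf{Step 2 (fixed times).} For fixed $t>0$,
$$
\mathbf{P}(\tilde\tau^n(X)>t)=\mathbf{P}(\tau^1(Y^n)>nt)\to0,
$$
since $\tau^1(Y^n)$ is tight by Step 1. Therefore, with probability tending to one, $X_{\tilde\tau^n\wedge t_i}=X_{\tilde\tau^n}$ simultaneously for all $i$. A Taylor expansion of $f$ around $x$, using $\|X_{\tilde\tau^n\wedge t}-x\|\le n^{-1/2}$, gives
$$
n^{1/2}\big(f(X_{\tilde\tau^n\wedge t})-f(x)\big)=J_f(x)\,n^{1/2}(X_{\tilde\tau^n\wedge t}-x)+O(n^{-1/2}),
$$
where the $O(\cdot)$ is uniform because the Hessians $H_{f_k}$ are bounded on the unit ball around $x$. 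Combining these facts with Step 1, the $m$-tuple of values at $t_1,\dots,t_m$ converges in law to the $m$-fold repetition of $J_f(x)\sigma(x)B_{\tau^1(\sigma(x)B)}$. This is the claimed convergence for bounded continuous $\varphi$, since $B$ has the law of $W$.

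\textbf{Step 3 (no weak convergence).} Every process in the sequence equals $0$ at $t=0$, while the candidate limit equals $J_f(x)\sigma(x)W_{\tau^1(\sigma(x)W)}$ at $t=0$. Evaluation at time $0$ is continuous for the Skorokhod topology. So weak convergence would force $J_f(x)\sigma(x)W_{\tau^1(\sigma(x)W)}=0$ a.s., which fails, for example, when $f$ is the identity, because then $\|\sigma(x)W_{\tau^1}\|=1$. Equivalently, tightness fails: each path moves a distance of order $1$ within a time of order $1/n$, so the modulus of continuity $w'$ does not vanish. Convergence on finite-dimensional distributions thus holds on the dense set $(0,\infty)$ but not on $[0,\infty)$, and weak convergence fails in general.
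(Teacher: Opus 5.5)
Your proof is correct, and it takes a genuinely different, more elementary route than the paper.

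\textbf{How the routes differ.} The paper uses three ingredients.
\begin{itemize}
\item For the exit-time convergence it cites the small-time FCLT of Gerhold et al.\ (Proposition \ref{Gerhold}) together with the Jacod--Shiryaev continuity of exit functionals (Lemmas \ref{conv_stopping_BM} and \ref{conv_stopped_BM}).
\item It proves $\tilde\tau^n(X)\downarrow0$ a.s.\ by a local-time argument (Lemmas \ref{local} and \ref{vanish}).
\item It removes the nonlinearity of $f$ via It\^o's formula and a u.c.p.\ criterion for continuous local martingales, built on Jacod--Shiryaev tightness results (Lemma \ref{vanishing_martingales} and Proposition \ref{vanishing_times_jacob_diffusion}).
\end{itemize}
You replace each of these.
\begin{itemize}
\item You couple $Y^n$ synchronously with $\sigma(x)B^n$, control the error by Doob's inequality, and then sandwich $\tau^{1-\delta}(\sigma(x)B^n)\le\tau^1(Y^n)\le\tau^{1+\delta}(\sigma(x)B^n)$ on $\{D^n<\delta\}$.
\item You use tightness of $\tau^1(Y^n)$, which is all that the passage from $t_i$ to $\tilde\tau^n(X)$ requires.
\item You use the pathwise bound
\[
\sup_{t\ge0}\bigl\|n^{1/2}(f(X_{\tilde\tau^n(X)\wedge t})-f(x))-J_f(x)\,n^{1/2}(X_{\tilde\tau^n(X)\wedge t}-x)\bigr\|\le Cn^{-1/2},
\]
which holds because the stopped path never leaves the closed ball of radius $n^{-1/2}$ about $x$.
\end{itemize}
That last bound is a globally uniform, pathwise version of Proposition \ref{vanishing_times_jacob_diffusion}, so none of the martingale machinery is needed. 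Applied at $t=\tilde\tau^n(X)$, it also gives the first claim of Theorem \ref{th1_NEW} directly, without Lemma \ref{martingale_surface_invariance_langevin_NEW}.

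Your $w'$ remark proves more than the paper states. When $J_f(x)\sigma(x)W_{\tau^1(\sigma(x)W)}\not\equiv0$, the sequence is not tight, so it has no weak limit at all, not merely not the proposed one.

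What each route buys: the paper's is modular, since it plugs into a general semimartingale limit theorem. Yours is self-contained and uses only continuity of $\sigma$ and local boundedness of $\mu$ near $x$.

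\textbf{Two minor fixes.}

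First, the regularity of the sphere cannot in general be argued through the $k$-th coordinate alone. Take $\sigma(x)=I_2$, $k=1$ and exit point $z=(0,1)$. Then the term $(\tilde B^1_t)^2$ is of order $t$ and is dominated by $2\tilde B^2_t$, which is of order $\sqrt t$. Instead, expand
\[
\|z+\sigma(x)\tilde B_t\|^2=1+2(\sigma(x)'z)\cdot\tilde B_t+\|\sigma(x)\tilde B_t\|^2 .
\]
\begin{itemize}
\item If $\sigma(x)'z\neq0$, apply the LIL to the one-dimensional Brownian motion $(\sigma(x)'z)\cdot\tilde B$.
\item If $\sigma(x)'z=0$, use that $(\sigma(x)\tilde B_t)_k\neq0$ at arbitrarily small $t$. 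This is the only place the $k$-th coordinate matters.
\end{itemize}
The paper asserts this same fact in Lemma \ref{conv_stopping_BM} without proof.

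Second, $B^n$ changes with $n$, so you should write $\tau^1(Y^n)-\tau^1(\sigma(x)B^n)\to0$ in probability, not $\tau^1(Y^n)\to\tau^1(\sigma(x)B)$.
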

	
	Furthermore, we will also see that if the volatility matrix is a rotation at the initial condition (i.e. $y \mapsto \sigma(y)$ evaluated at $x$ is a rotation matrix) then the limit law of the sequence $(n^{1/2}(f(X_{\tilde{\tau}^n(X)})-f(x)))_{n\in \mathbb{N}}$ - that is, the scaled processes evaluated at the first exit times - depends on a random variable uniformly distributed on the unit sphere. Indeed, using Theorem \ref{stopped_langevin_fdd_NEW} we can prove our second main result.
	
	\begin{theorem}
		\label{th1_NEW}
		We have 
		$$
		n^{1/2}(f(X_{\tilde{\tau}^n(X)})-f(x))\,{\stackrel{\textnormal{w}}{\to}}\, J_f(x)\sigma(x)W_{\tau^1(\sigma(x)W)}
		$$
		{as $n\to \infty$.} In particular, if $\sigma(x)$ is a rotation then for any $\varphi:\mathbb{R}^\ell\to \mathbb{R}$ continuous and bounded we have
		$$
		\mathbf{E}[\varphi(n^{1/2}(f(X_{\tilde{\tau}^n(X)})-f(x)))]\,{{\to}}\, \int_{\mathcal{S}^{d-1}}\varphi(J_f(x)z)\varpi(dz)
		$$
		{as $n\to \infty$} where $\varpi(dz)$ is the normalised canonical surface measure on the unit sphere.
	\end{theorem}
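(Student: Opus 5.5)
The first claim follows from Theorem \ref{stopped_langevin_fdd_NEW} once we show that $X_{\tilde{\tau}^n(X)}$ and $X_{\tilde{\tau}^n(X)\wedge t}$ coincide with probability tending to one for any fixed $t>0$. Set $Y^n_s:=n^{1/2}(f(X_{\tilde{\tau}^n(X)\wedge s})-f(x))$. On the event $\{\tilde{\tau}^n(X)\le t\}$ we have $Y^n_t=n^{1/2}(f(X_{\tilde{\tau}^n(X)})-f(x))$. Hence for $\varphi$ continuous and bounded,
$$
\bigl|\mathbf{E}[\varphi(n^{1/2}(f(X_{\tilde{\tau}^n(X)})-f(x)))]-\mathbf{E}[\varphi(Y^n_t)]\bigr|\le 2\|\varphi\|_\infty\,\mathbf{P}(\tilde{\tau}^n(X)>t).
$$
Taking $m=1$ and $t_1=t$ in Theorem \ref{stopped_langevin_fdd_NEW} gives $\mathbf{E}[\varphi(Y^n_t)]\to\mathbf{E}[\varphi(J_f(x)\sigma(x)W_{\tau^1(\sigma(x)W)})]$. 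It therefore remains to prove $\mathbf{P}(\tilde{\tau}^n(X)>t)\to0$.

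Since $\tilde{\tau}^n(X)$ is nonincreasing in $n$, it suffices to show that $\tilde{\tau}^n(X)\to0$ a.s. This is the step needing care, and Assumption \ref{diffu} is used here.
\begin{itemize}
\item Suppose $\tilde{\tau}^n(X)\ge\delta>0$ for all $n$ on some event. Then on that event $X_s=x$ for all $s\le\delta$.
\item Take the coordinate $k$ from Assumption \ref{diffu}. Until the exit from the unit ball, the component $X^k$ is a semimartingale with quadratic variation $\int_0^\cdot(\sigma\sigma')_{k,k}(X_s)\,ds$, whose derivative is bounded below by $c>0$.
\item On the event above, $X^k$ is constant on $[0,\delta]$, so its quadratic variation vanishes there. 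This contradicts the lower bound $c\,\delta>0$.
\end{itemize}
Hence the event has probability zero, and $\tilde{\tau}^n(X)\downarrow0$ a.s.

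Alternatively, one can argue by Blumenthal's 0-1 law via the Dambis--Dubins--Schwarz representation: a Brownian motion started at its origin leaves every neighbourhood immediately. Either route gives convergence in probability. Letting $n\to\infty$ in the displayed bound then proves the first statement.

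For the rotation case, put $O=\sigma(x)$. Because $O$ is orthogonal, $\|OW_s\|=\|W_s\|$, so $\tau^1(OW)=\tau^1(W)$. Therefore $OW_{\tau^1(OW)}=OW_{\tau^1(W)}$. By rotational invariance of Brownian motion, $W_{\tau^1(W)}$ is distributed as $\varpi$: $OW$ is again a standard Brownian motion, and the exit time is invariant under the rotation, so the exit law is a rotation-invariant probability on $\mathcal{S}^{d-1}$, hence equal to $\varpi$. The law of $OW_{\tau^1(W)}$ is the pushforward of $\varpi$ under $O$, which is again $\varpi$. Applying the continuous map $z\mapsto\varphi(J_f(x)z)$ gives the stated integral. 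We also note that $\tau^1(W)<\infty$ a.s., so the limit random variable is well defined.
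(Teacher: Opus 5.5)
Your proof is correct, but it takes a more direct route than the paper.

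\textbf{The paper's route.} It proves the first claim through Lemma \ref{martingale_surface_invariance_langevin_NEW} in three steps. First, it writes the exit value as $n^{1/2}\int_0^{\tilde{\tau}^n(X)}\tilde{b}(X_u)du+V^n$. Here $V^n$ is the a.s.\ and $L^1$ limit, as $a\to\infty$, of the stochastic-integral part $F^n_a$; this limit comes from $L^2$-boundedness, which uses $\mathbf{E}[\tilde{\tau}^1(X)]<\infty$. Second, it shows $V^n\to^{\textnormal{w}}J_f(x)\sigma(x)W_{\tau^1(\sigma(x)W)}$. It does this by applying Theorem \ref{stopped_langevin_fdd_NEW} at a fixed large $a$ and approximating $V^n$ by $F^n_a$ uniformly in $n$, tested against bounded Lipschitz $\varphi$. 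Third, it removes the drift term with Slutsky's theorem.

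\textbf{Your route.} You skip the decomposition altogether. The exit value and the time-$t$ marginal of the stopped process coincide on $\{\tilde{\tau}^n(X)\le t\}$. Since $t$ is fixed while $n\to\infty$, $\mathbf{P}(\tilde{\tau}^n(X)>t)\to 0$.

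The paper itself uses this same comparison in the first step of Lemma \ref{fdd_base_langevin} and in the alternative bound inside Lemma \ref{martingale_surface_invariance_langevin_NEW}. Applied directly to the exit value, it makes several things unnecessary: martingale convergence, uniform integrability, finiteness of $\mathbf{E}[\tilde{\tau}^1(X)]$, and the separate drift estimate at the random time. It even absorbs the event $\{\tilde{\tau}^n(X)=\infty\}$ into the error term. The paper's route buys only extra structure (the exit value as the terminal value of a uniformly integrable martingale plus a negligible drift), which the statement does not need.

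\textbf{On $\tilde{\tau}^n(X)\downarrow 0$.} You could simply cite Lemma \ref{vanish}. Your quadratic-variation argument is a cleaner substitute for the paper's local-time proof of Lemma \ref{local}. You should, however, say explicitly why $[X^k]_\delta=0$ on $\{X_s=x,\ s\le\delta\}$: $[X^k]_\delta$ is the limit in probability, hence a.s.\ along a subsequence, of sums of squared increments, and these vanish on that event. Meanwhile $[X^k]_\delta=\delta(\sigma\sigma')_{k,k}(x)>0$ there, so only positivity at $x$ is actually used.

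\textbf{Rotation case.} Your argument is the same as the paper's, with the invariance step spelled out.
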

	
	We shall introduce some general lemmas on the weak convergence of stochastic processes that will be useful for the proofs of the central results; these will be presented in the last section respectively in two dedicated subsections. {We remark again that - to the best of our knowledge - the weak convergence of the sequence of stochastic processes of Theorem \ref{stopped_langevin_fdd_NEW} has not been investigated in previous work. It is mentioned in \cite{Stoyanov} that in the literature there are (possibly nontrivial) counterexamples in functional limit theorems, however no specific details are mentioned except general references. We may then provide an explicit illustrative example of Theorem \ref{stopped_langevin_fdd_NEW} as following.}
	
	\begin{example}
		\label{example}
		{Let $d=1,\mu=0,\sigma=1,x=0$, so that $X$ is a standard Brownian motion on $\mathbb{R}$ and $\tilde{\tau}^n(X)=\tilde{\tau}^n(W)=\inf\{s:|W_s|=n^{-1/2}\}$. Suppose for simplicity that $\ell=1$ and $f(0)=0$, so that the sequence of scaled processes is given by
			$$(n^{1/2}f(W_{t\wedge \tilde{\tau}^n(W)}))_{t\geq 0},\quad n\in \mathbb{N}$$
			Then, by Theorem \ref{stopped_langevin_fdd_NEW} such sequence converges in the finite-dimensional distributions sense (over $(0,\infty)$) to the process $(f'(0)W_{\tau^1(W)})_{t\geq 0}$, which takes the (random) value $f'(0)W_{\tau^1(W)}$ at all $t\geq 0$. However, the sequence does not converge weakly in the sense of stochastic processes, in general. Indeed, $n^{1/2}f(W_{0\wedge \tilde{\tau}^n(W)})=0$ for all $n \in \mathbb{N}$, so the coordinate projections at time $t=0$ cannot converge weakly to $f'(0)W_{\tau^1(W)}$ if $f'(0)\neq 0$ since $W_{\tau^1(W)}$ is uniformly distributed on $\{-1,1\}$. A simple, nontrivial choice for $f$ in this case would be $f(x)=e^x-1$, which is null at zero and such that $f'(x)=e^x$, so $f'(0)=1$.}
	\end{example}
	
	\section{Preparatory lemmas}
	
	\subsection{Weak convergence in the local uniform and Skorokhod topologies}
	
	Given a sequence of c\`{a}dl\`{a}g $\mathbb{R}^d$-valued processes $(Y^n)_{n \in \mathbb{N}}$ on a probability space - not necessarily adapted to the same filtration - weak convergence to the law of a c\`{a}dl\`{a}g process $Y$ is denoted with $Y^n\Rightarrow Y$ and it holds iff $\mathbf{E}[\varphi(Y^n)]\to \mathbf{E}[\varphi(Y)]$ as $n \to \infty$ for all functionals $\varphi$ bounded and continuous in the local uniform (resp. Skorokhod) topology, see e.g. (\cite{JacodShiryaev}; VI.1 pp. 325-336) for reference. If $y$ is a continuous function and $(y^n)_{n \in \mathbb{N}}$ a sequence of c\`{a}dl\`{a}g functions, then $y^n\to y$ in the Skorokhod topology iff  $y^n\to y$ in the {local uniform topology. We may abbreviate 'local uniform' with 'l.u.' and 'Skorokhod' with 'Sk.' for ease of exposition. For reference, see} (\cite{JacodShiryaev}; VI.1.17.(b) p. 328). 
	We can then conclude that if the limit process $Y$ has a.s. continuous paths, then weak convergence in each sense is equivalent to the other. 
	
	\begin{lemma} 
		\label{Sk_and_lu_cont}	
		Suppose $\mathbf{P}(Y \in C_{\mathbb{R}^d}[0,\infty))=1$. Let $(Y^n)_{n \in \mathbb{N}}$ be a sequence of c\`{a}dl\`{a}g stochastic processes. Then weak convergence $Y^n\Rightarrow Y$ in the Skorokhod topology is equivalent to weak convergence in the local uniform topology.
	\end{lemma}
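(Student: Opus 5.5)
The plan is to use the Portmanteau characterisation together with the fact, recalled just before the statement from (\cite{JacodShiryaev}; VI.1.17.(b)), that Skorokhod and local uniform convergence coincide at continuous limit points. Write $D:=D_{\mathbb{R}^d}[0,\infty)$ for the c\`{a}dl\`{a}g path space and $C:=C_{\mathbb{R}^d}[0,\infty)$ for the continuous paths. One point needs care. The local uniform topology on $D$ is finer than the Skorokhod one, but it is not separable, and its Borel $\sigma$-field is larger than the usual one. So I will interpret ``weak convergence in the local uniform topology'' as convergence of $\mathbf{E}[\varphi(Y^n)]$ for all bounded l.u.-continuous $\varphi$ that are measurable with respect to the Skorokhod Borel $\sigma$-field, which is the convention of (\cite{JacodShiryaev}; VI.1). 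With this convention every expectation below is well defined.

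\emph{Local uniform $\Rightarrow$ Skorokhod.} Since the local uniform topology is finer than the Skorokhod topology, every Skorokhod-continuous bounded functional is also l.u.-continuous and bounded. Hence $\mathbf{E}[\varphi(Y^n)]\to\mathbf{E}[\varphi(Y)]$ for all such $\varphi$, which is exactly $Y^n\Rightarrow Y$ in the Skorokhod sense.

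\emph{Skorokhod $\Rightarrow$ local uniform.} Fix $\varphi$ bounded and l.u.-continuous. By VI.1.17.(b), if $y^k\to y$ in Skorokhod and $y\in C$, then $y^k\to y$ locally uniformly, so $\varphi(y^k)\to\varphi(y)$. Thus $\varphi$ is Skorokhod-continuous at every point of $C$. The set $\mathrm{Disc}(\varphi)$ of Skorokhod-discontinuity points of $\varphi$ is therefore contained in $D\setminus C$, and $\mathbf{P}(Y\in\mathrm{Disc}(\varphi))\le\mathbf{P}(Y\notin C)=0$. The extended Portmanteau theorem (continuous mapping theorem) in the Polish space $D$ with the Skorokhod metric then gives $\mathbf{E}[\varphi(Y^n)]\to\mathbf{E}[\varphi(Y)]$. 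Here $\varphi$ is Skorokhod-Borel measurable by our convention, and $\mathrm{Disc}(\varphi)$ is Borel as it always is.

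The main obstacle is this measurability subtlety rather than any estimate. If measurability caused trouble, the fallback is Skorokhod's representation theorem, which applies because $D$ with the Skorokhod topology is Polish. It gives copies $\tilde Y^n\to\tilde Y$ almost surely in Skorokhod, with $\tilde Y\in C$ almost surely. By VI.1.17.(b) the convergence is then locally uniform almost surely, so $\varphi(\tilde Y^n)\to\varphi(\tilde Y)$ almost surely, and bounded convergence concludes the argument.
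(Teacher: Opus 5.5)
Your proof is correct. Your Skorokhod-to-local-uniform direction is exactly the paper's argument: by Jacod--Shiryaev VI.1.17.(b), a bounded l.u.-continuous $\varphi$ is Skorokhod-continuous at every point of $C_{\mathbb{R}^d}[0,\infty)$, so its Skorokhod discontinuity set is $Y$-null and the continuous mapping theorem gives $\mathbf{E}[\varphi(Y^n)]\to\mathbf{E}[\varphi(Y)]$. For the converse the paper only says ``argue equivalently'', which implicitly needs a continuous mapping step in the non-separable l.u.\ topology; your route is cleaner, since the l.u.\ topology is finer and so every bounded Skorokhod-continuous functional is already an admissible l.u.\ test function, and your explicit Skorokhod-Borel measurability convention makes precise a point the paper leaves implicit.
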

	\begin{proof} $(\Rightarrow)$. Suppose we have $Y^n\Rightarrow Y$ in the Skorokhod topology. {Let $\varphi$ be a functional which is bounded and continuous in the l.u. topology}. As $\varphi$ is everywhere l.u. continuous, then $\mathbf{P}(Y \in \{y:\varphi\textrm{ is l.u. continuous at }y\})=1$. But then $1=\mathbf{P}(\{Y\in C_{\mathbb{R}^d}[0,\infty)\}\cap \{Y \in \{y:\varphi\textrm{ is l.u. continuous at }y\}\})\leq \mathbf{P}(Y \in \{y:\varphi\textrm{ is Sk. continuous at }y\})$. So by continuous mapping we have $\varphi(Y^n){\to}^\textnormal{w} \varphi(Y)$ and since $\varphi$ is bounded we get $\mathbf{E}[\varphi(Y^n)]\to \mathbf{E}[\varphi(Y)]$. For $(\Leftarrow)$ we argue equivalently.
	\end{proof}
	
	\subsection{Weak convergence of stopped processes with Brownian limit}
	\label{stopped_paragraph}
	
	We shall denote again with $W$ a standard $\mathbb{R}^d$-Brownian motion. Let $\Sigma$ be a $d\times d$ volatility matrix {such that} $\exists k:(\Sigma\Sigma')_{k,k}>0$. {In particular, this diffusivity assumption on $\Sigma$ can be compared to Assumption \ref{diffu}, while $\sigma(x)$ in Theorem \ref{stopped_langevin_fdd_NEW} is an example of such volatility matrix satisfying the required condition.} We now present the main results on the weak convergence of stopped processes when the limit process is $\Sigma W$.
	
	\begin{lemma} 
		\label{conv_stopping_BM}
		If $Y^n\Rightarrow \Sigma W$, then $\tau^r(Y^n)\to^\textnormal{w} \tau^r(\Sigma W)$.
	\end{lemma}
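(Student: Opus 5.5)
We will use the continuous mapping theorem. The plan is to show that the first exit time functional $\tau^r$, viewed as a map from the path space (with the Skorokhod, equivalently by Lemma \ref{Sk_and_lu_cont} the local uniform, topology) into $[0,\infty]$, is continuous at $\mathbf{P}$-almost every path of $\Sigma W$. Then $Y^n\Rightarrow \Sigma W$ gives $\tau^r(Y^n)\to^\textnormal{w}\tau^r(\Sigma W)$. Since $\Sigma W$ has continuous paths, Lemma \ref{Sk_and_lu_cont} lets us work with the local uniform topology throughout. We also use that $\tau^r(\Sigma W)<\infty$ a.s. This holds because the $k$-th coordinate $(\Sigma W)_k$ is a one-dimensional Brownian motion with variance $(\Sigma\Sigma')_{k,k}>0$, so it exits $[-r,r]$ in finite time a.s.

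Fix a continuous path $y$ with $y_0$ inside the open ball and $T:=\tau^r(y)<\infty$, and let $y^n\to y$ locally uniformly. For the lower bound, take any $t<T$. Then $\sup_{s\le t}\|y_s\|<r$ by continuity and compactness, and uniform convergence on $[0,t]$ gives $\sup_{s\le t}\|y^n_s\|<r$ for large $n$. Hence $\liminf\tau^r(y^n)\ge T$; note this step needs $\|y_0\|<r$, which holds since $\Sigma W_0=0$. For the upper bound, we need the condition that for every $\varepsilon>0$ there is $s\in(T,T+\varepsilon)$ with $\|y_s\|>r$. In words, the path strictly exits immediately after first touching the sphere. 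Under this condition, uniform convergence on $[0,T+\varepsilon]$ forces $\|y^n_s\|>r$ eventually, so $\tau^r(y^n)\le T+\varepsilon$ and $\limsup\tau^r(y^n)\le T$. Thus $\tau^r$ is continuous at every path satisfying this condition.

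The main obstacle is proving that $\Sigma W$ satisfies the condition almost surely, i.e. that it does not merely touch the sphere of radius $r$ at time $T=\tau^r(\Sigma W)$ without crossing it. We will use the strong Markov property at the stopping time $T$: the process $B_u:=\Sigma W_{T+u}-\Sigma W_T$ is again distributed as $\Sigma W$ and is independent of $\mathscr{F}_T$. Let $z=\Sigma W_T$, so $\|z\|=r$. Since $\|z+B_u\|^2=r^2+2\langle z,B_u\rangle+\|B_u\|^2\ge r^2+2\langle z,B_u\rangle$, it suffices to show that $\langle z,B_u\rangle>0$ for arbitrarily small $u>0$ a.s.

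Conditionally on $z$, the process $\langle z,B_u\rangle$ is a one-dimensional Brownian motion with variance $z'\Sigma\Sigma'z$. If this variance is positive, Blumenthal's $0$-$1$ law (or the law of the iterated logarithm) shows that it takes positive values in every interval $(0,\varepsilon)$ a.s. The delicate case is when $z'\Sigma\Sigma' z=0$, which can occur if $\Sigma$ is degenerate. However, $\Sigma W$ lives in the range $V$ of $\Sigma$, and $z\in V$. On $V$, the quadratic form $v\mapsto v'\Sigma\Sigma'v=\|\Sigma'v\|^2$ is positive definite, because $\ker\Sigma'=V^\perp$. Therefore $z'\Sigma\Sigma'z>0$ for every $z\in V\setminus\{0\}$, and the delicate case does not arise. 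Combining this with the finiteness of $\tau^r(\Sigma W)$, we conclude that $\tau^r$ is a.s. continuous at $\Sigma W$, and the continuous mapping theorem finishes the proof.
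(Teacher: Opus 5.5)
Your proof is correct and follows the same route as the paper. Both arguments show that $\tau^r$ is continuous at continuous paths that strictly leave the ball immediately after first touching the sphere, use the strong Markov property at $\tau^r(\Sigma W)$ to show this holds almost surely, and conclude by continuous mapping; the paper cites Jacod--Shiryaev VI.2.11 for the continuity, via the condition $\tau^r(y)=\tau^{r+}(y)$ (equivalent to yours for continuous $y$), whereas you prove it directly, and your projection argument via $\langle z,B_u\rangle$ with $\Sigma'z\neq 0$ for nonzero $z$ in the range of $\Sigma$ supplies the justification, omitted in the paper, that $\mathbf{P}(\sup_{t\le\varepsilon}\|z+\Sigma W_t\|=r)=0$ when $\|z\|=r$.
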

	\begin{proof} The function $y\mapsto \tau^r(y)$ is continuous at all $y$ {such that} $r\notin\{s:\tau^s(y)<\tau^{s^+}(y)\}$ (where $\tau^{s^+}(y):=\lim_{u\downarrow s}\tau^u(y)$) by (\cite{JacodShiryaev}; VI.2.11 p. 341). The functions $s\mapsto \tau^s(\Sigma W(\omega))$ are nondecreasing left-continuous. For any $\varepsilon>0$ and $s\downarrow r$, by strong Markov property:
		$$
		\begin{aligned}\mathbf{P}(\tau^r(\Sigma W)+\varepsilon<\tau^{r^+}(\Sigma W))&\leq \mathbf{P}(\tau^r(\Sigma W)+\varepsilon<\tau^{s}(\Sigma W))\\
		&=\mathbf{P}\bigg(\sup_{\tau^r(\Sigma W)\leq t \leq \tau^r(\Sigma W)+\varepsilon}\|\Sigma W_t\|<s\bigg)\\
		&=\mathbf{E}\bigg[\mathbf{P}\bigg(\sup_{t\leq \varepsilon}\|z+\Sigma W_t\|<s\bigg)\bigg|_{z=\Sigma W_{\tau^r(\Sigma W)}}\bigg]\\
		&\,{{\to}}\,\mathbf{E}\bigg[\mathbf{P}\bigg(\sup_{t\leq \varepsilon}\|z+\Sigma W_t\|\leq r\bigg)\bigg|_{z=\Sigma W_{\tau^r(\Sigma W)}}\bigg]\\
		&=0
		\end{aligned}
		$$
		{as $s\downarrow r$. The} last equality follows from the fact that if $\|z\|=r$ then {
			$$\mathbf{P}\Big(\sup_{t\leq \varepsilon}\|z+\Sigma W_t\|\leq r\Big)=\mathbf{P}\Big(\sup_{t\leq \varepsilon}\|z+\Sigma W_t\|=r\Big)=0$$
		}As $\mathbf{P}(\tau^r(\Sigma W)+\varepsilon<\tau^{r^+}(\Sigma W))=0$ for all $\varepsilon >0$, we conclude that $\mathbf{P}(\tau^r(\Sigma W)<\tau^{r^+}(\Sigma W))=0$ by measure continuity, and the claim follows by continuous mapping.
	\end{proof}
	
	\begin{lemma} 
		\label{conv_stopped_BM} 
		If $Y^n\Rightarrow \Sigma W$, then $Y^n_{\tau^r(Y^n)\wedge .}\Rightarrow  \Sigma W_{\tau^r(\Sigma W)\wedge .}$.
	\end{lemma}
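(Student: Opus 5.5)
The plan is to prove the claim with the continuous mapping theorem, applied to the stopping map $\Phi:y\mapsto y_{\tau^r(y)\wedge .}$ from the Skorokhod space $D_{\mathbb{R}^d}[0,\infty)$ to itself. It suffices to show that $\Phi$ is continuous (in the Skorokhod topology) at $\mathbf{P}$-almost every path of $\Sigma W$. By Lemma \ref{Sk_and_lu_cont}, and because $\Sigma W$ and $\Sigma W_{\tau^r(\Sigma W)\wedge .}$ have continuous paths, we may work in the local uniform topology throughout. The natural set of good paths is
$$G:=\{y\in C_{\mathbb{R}^d}[0,\infty):\tau^r(y)=\tau^{r^+}(y)<\infty\}.$$
The proof of Lemma \ref{conv_stopping_BM} already gives $\mathbf{P}(\tau^r(\Sigma W)<\tau^{r^+}(\Sigma W))=0$. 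Moreover, $\tau^r(\Sigma W)<\infty$ a.s., because the diffusivity assumption $(\Sigma\Sigma')_{k,k}>0$ makes the $k$-th coordinate a nondegenerate one-dimensional Brownian motion, which is unbounded. Hence $\mathbf{P}(\Sigma W\in G)=1$.

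The main step is continuity of $\Phi$ at each $y\in G$. Take $y^n\to y$ locally uniformly with $y^n$ càdlàg. By (\cite{JacodShiryaev}; VI.2.11), used exactly as in Lemma \ref{conv_stopping_BM}, we get $\tau^r(y^n)\to\tau^r(y)=:T$. Fix $N>T+1$ and split, for $t\le N$,
$$\|y^n_{\tau^r(y^n)\wedge t}-y_{T\wedge t}\|\le \|y^n_{\tau^r(y^n)\wedge t}-y_{\tau^r(y^n)\wedge t}\|+\|y_{\tau^r(y^n)\wedge t}-y_{T\wedge t}\|.$$
For large $n$ the first term is at most $\sup_{s\le N}\|y^n_s-y_s\|\to0$. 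The second term is at most $\omega_y(|\tau^r(y^n)-T|;N)$, the modulus of continuity of $y$ on $[0,N]$, and this tends to $0$ because $y$ is uniformly continuous on compacts. Thus $\Phi(y^n)\to\Phi(y)$ locally uniformly, and hence in the Skorokhod topology.

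To finish, we need to handle the fact that the limit $Y^n\Rightarrow\Sigma W$ is given in the Skorokhod sense while the good set $G$ consists of continuous paths. By VI.1.17(b) of \cite{JacodShiryaev}, Skorokhod convergence to a continuous $y$ is the same as local uniform convergence, so the argument above shows that $\Phi$ is Skorokhod-continuous at every point of $G$. The continuous mapping theorem then yields $\Phi(Y^n)\Rightarrow\Phi(\Sigma W)$.

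The only delicate point is the convergence of the hitting times, $\tau^r(y^n)\to\tau^r(y)$. This convergence fails when the path touches the sphere of radius $r$ and comes back inside. That is exactly why we restrict to $G$, and we cite the nontouching property already established in the proof of Lemma \ref{conv_stopping_BM} rather than reproving it.
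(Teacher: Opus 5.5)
Your proof is correct and takes essentially the paper's route. The paper applies continuous mapping to $y\mapsto y_{\tau^r(y)\wedge\cdot}$, using path continuity of $\Sigma W$ together with the non-touching property $\mathbf{P}(\tau^r(\Sigma W)<\tau^{r^+}(\Sigma W))=0$ from the proof of Lemma \ref{conv_stopping_BM}; the only difference is that you verify Skorokhod continuity of the stopping map by hand (hitting-time continuity from VI.2.11 of \cite{JacodShiryaev} plus uniform continuity of the limit path on compacts), where the paper simply cites VI.2.12 of \cite{JacodShiryaev}.
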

	\begin{proof} Since $\Sigma W$ has continuous paths and by (the proof of) Lemma \ref{conv_stopping_BM} we have $\mathbf{P}(\tau^r(\Sigma W)<\tau^{r^+}(\Sigma W))=0$, the claim follows from (\cite{JacodShiryaev}; VI.2.12 p. 341) by continuous mapping.
	\end{proof}
	
	\subsection{Processes converging to $0$ in {u.c.p.} and continuous local martingales}
	
	If $(Y^n)_{n \in \mathbb{N}}$ is a sequence of c\`{a}dl\`{a}g stochastic processes and $Y$ a c\`{a}dl\`{a}g limit process, we say that $Y^n\to^{\textrm{ucp}}{Y}$ - uniformly on compact sets in probability - if for any $\varepsilon >0$ and $N \in \mathbb{N}$ we have $\mathbf{P}(\sup_{t\leq N}\|Y^n_t-Y_t\|>\varepsilon)\to 0$ as $n\to \infty$. {As is standard, we abbreviate 'uniformly on compact sets in probability' with 'u.c.p.' for ease of exposition.}
	
	\begin{lemma}[(\cite{JacodShiryaev}; VI.3.31 p. 352)]
		\label{JS_VI_3_31_352}	
		Let $Z^n=H^n+Y^n$. If $H^n\Rightarrow H$ and $Y^n\to^{\textnormal{ucp}}0$, then $Z^n\Rightarrow H$.
	\end{lemma}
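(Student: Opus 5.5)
We will prove this lemma, which is Jacod--Shiryaev VI.3.31, directly from the characterisation of weak convergence by bounded uniformly continuous test functionals. We work in the Skorokhod topology on $D_{\mathbb{R}^d}[0,\infty)$ with a metric $\delta$ compatible with it. By the Portmanteau theorem it suffices to show $\mathbf{E}[\varphi(Z^n)]\to\mathbf{E}[\varphi(H)]$ for every $\varphi$ that is bounded and uniformly continuous with respect to $\delta$. Write
$$
|\mathbf{E}[\varphi(Z^n)]-\mathbf{E}[\varphi(H)]|\leq |\mathbf{E}[\varphi(H^n+Y^n)]-\mathbf{E}[\varphi(H^n)]|+|\mathbf{E}[\varphi(H^n)]-\mathbf{E}[\varphi(H)]|.
$$
The second term vanishes by the hypothesis $H^n\Rightarrow H$, so only the first term needs work.

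For the first term we use the standard metric
$$
\delta(y,z)=\sum_{N\geq1}2^{-N}\left(1\wedge \delta_N(y,z)\right),
$$
where $\delta_N$ is the Skorokhod distance of the restrictions to $[0,N]$, built with a continuous cut-off. Taking the identity time change shows that $\delta_N(y,z)$ is bounded by a constant multiple of $\sup_{t\leq N+1}\|y_t-z_t\|$, up to the usual cut-off adjustments. Hence for every $\eta>0$ there exist $N$ and $\varepsilon>0$ such that
$$
\sup_{t\leq N}\|Y^n_t\|\leq\varepsilon \quad\Longrightarrow\quad \delta(H^n+Y^n,H^n)\leq\eta .
$$
Given $\gamma>0$, choose $\eta$ so that $\delta\leq\eta$ implies $|\varphi(\cdot)-\varphi(\cdot)|\leq\gamma$, which is possible by uniform continuity of $\varphi$. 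Then
$$
|\mathbf{E}[\varphi(H^n+Y^n)]-\mathbf{E}[\varphi(H^n)]|\leq \gamma+2\|\varphi\|_\infty\,\mathbf{P}\Big(\sup_{t\leq N}\|Y^n_t\|>\varepsilon\Big).
$$
The probability on the right tends to $0$ because $Y^n\to^{\textnormal{ucp}}0$, and $\gamma$ was arbitrary, so the first term also vanishes.

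The only delicate point is the middle inequality, namely that the Skorokhod distance is dominated by the local uniform distance. This is a known property of the $J_1$ metric, obtained by taking $\lambda=\mathrm{id}$ in its definition. We will cite it from (\cite{JacodShiryaev}; VI.1) rather than reprove it.

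Two remarks on the scope of the argument. First, it does not require $H^n$ and $Y^n$ to be adapted to a common filtration, only that each pair lives on a common probability space so that the sum is defined; this matches the setting of the lemma. Second, if one prefers the local uniform topology, the same argument works verbatim with the l.u. metric, since $\delta(H^n+Y^n,H^n)$ then depends only on $\sup_{t\leq N}\|Y^n_t\|$. This is consistent with Lemma \ref{Sk_and_lu_cont} whenever the limit $H$ is continuous.
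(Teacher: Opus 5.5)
Your proof is correct. It differs from the paper only in that the paper gives no argument at all: the lemma is imported from Jacod--Shiryaev (VI.3.31).

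What you supply is the standard self-contained ``converging together'' (metric-space Slutsky) argument, and you isolate the right two ingredients. First, the identity time change shows that the Skorokhod metric $\delta$ is dominated by a metric of local uniform convergence, so $Y^n\to^{\textnormal{ucp}}0$ gives $\delta(H^n+Y^n,H^n)\to 0$ in probability. Second, in any metric space it suffices to test weak convergence against bounded uniformly continuous functionals, which turns that into $|\mathbf{E}[\varphi(Z^n)]-\mathbf{E}[\varphi(H^n)]|\to 0$.

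Compared with the bare citation, your version makes explicit how little is used. Each pair $(H^n,Y^n)$ only needs a common probability space, with no common filtration. $H$ need not have continuous paths. You never need continuity of addition on the Skorokhod space, which fails in general. Modulo the usual measurability caveats for the non-separable local uniform topology, the same argument also works in that topology, in line with how the paper passes between the two topologies via Lemma \ref{Sk_and_lu_cont}.

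The citation buys brevity; your argument buys transparency at the cost of one paragraph.
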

	
	\begin{lemma} 
		\label{wconv_to_pconv_constant_processes}	
		Let $c \in C_{\mathbb{R}^d}[0,\infty)$. Then, $Y^n\Rightarrow c$ implies $Y^n\to^\textnormal{ucp}c$.
	\end{lemma}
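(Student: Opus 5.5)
The plan is to reduce the statement to the classical fact that weak convergence to a constant limit implies convergence in probability, and to upgrade this to uniform convergence on compacts using the equicontinuity of $c$ on compact intervals. Fix $\varepsilon>0$ and $N\in\mathbb{N}$. Consider the functional on the Skorokhod space
$$
\psi_N(y):=\sup_{t\leq N}\|y_t-c_t\|\wedge 1 .
$$
Since the limit $c$ is deterministic and continuous, Lemma \ref{Sk_and_lu_cont} lets us regard $Y^n\Rightarrow c$ as weak convergence in the local uniform topology. The goal is to show $\mathbf{P}(\psi_N(Y^n)>\varepsilon)\to 0$. The cleanest way is to find a bounded, l.u.-continuous functional that dominates the event $\{\psi_N(Y^n)>\varepsilon\}$ and vanishes at $c$.

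First I would check that $\psi_N$, or a slight modification of it, is continuous at the point $c$ in the local uniform topology. The sup over $[0,N]$ is not globally continuous in l.u. (in the Skorokhod topology it fails at paths with jumps at $N$). To avoid this, I would use the standard l.u. metric functional
$$
\rho(y):=\sum_{k\geq1}2^{-k}\Big(\sup_{t\leq k}\|y_t-c_t\|\wedge 1\Big),
$$
which is bounded and continuous (indeed Lipschitz) for the local uniform metric, with $\rho(c)=0$. By weak convergence, $\mathbf{E}[\rho(Y^n)]\to\rho(c)=0$.

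Next I would apply Markov's inequality. Since $\rho(y)\geq 2^{-N}(\sup_{t\leq N}\|y_t-c_t\|\wedge1)$, we get, for $\varepsilon<1$,
$$
\mathbf{P}\Big(\sup_{t\leq N}\|Y^n_t-c_t\|>\varepsilon\Big)\leq \mathbf{P}(\rho(Y^n)> 2^{-N}\varepsilon)\leq \frac{2^N}{\varepsilon}\mathbf{E}[\rho(Y^n)]\to 0,
$$
which is exactly $Y^n\to^{\textnormal{ucp}}c$.

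The only delicate point is justifying that $\rho$ is a legitimate test functional, that is, bounded and continuous in the topology in which convergence is given. If the hypothesis is read in the Skorokhod sense, the route goes through Lemma \ref{Sk_and_lu_cont}, using that the limit $c$ is continuous and hence a.s. in $C_{\mathbb{R}^d}[0,\infty)$. Alternatively, one can invoke the continuous mapping theorem directly: $\rho$ is Skorokhod-continuous at every continuous path by (\cite{JacodShiryaev}; VI.1.17.(b)). Measurability of $\rho(Y^n)$ follows from right-continuity of paths, since the sup can be taken over rationals together with $k$. I expect no further obstacle.
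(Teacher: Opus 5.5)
Your proof is correct and is essentially the paper's argument: both pass to the local uniform topology via Lemma \ref{Sk_and_lu_cont} and apply Markov's inequality to a bounded, l.u.-continuous functional that vanishes at $c$. The only difference is the choice of functional. The paper uses $y\mapsto\sup_{t\leq N}\|y_t-c_t\|\wedge\varepsilon$ directly, and, contrary to your remark, this functional is continuous in the local uniform topology; it fails to be continuous only in the Skorokhod topology, at paths jumping at $N$. Your detour through $\rho$ is therefore harmless but unnecessary.
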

	\begin{proof} By Lemma \ref{Sk_and_lu_cont} we know that we have weak convergence in the local uniform topology. Let $N \in \mathbb{N}$ and $\varepsilon >0$. 
		Then, {
			$$\varepsilon \mathbf{P}\Big(\sup_{t\leq N}|Y^n_t-c_t|>\varepsilon\Big)\leq \mathbf{E}\Big[\sup_{t\leq N}|Y^n_t-c_t|\wedge \varepsilon\Big]\to  0$$
		}
		as $n\to \infty$. Since $N,\varepsilon$ were arbitrary, this implies the conclusion.
	\end{proof}
	
	If $Y=(Y^1,...,Y^d)$ is a $\mathbb{R}^d$-valued local martingale, we denote with $\langle Y^{k} \rangle $ the predictable quadratic variation of its $k$-th component. A sequence of c\`{a}dl\`{a}g processes $(Y^n)_{n \in \mathbb{N}}$ is called C-tight if it is tight and all possible limit laws are concentrated on the continuous-path processes. {In order to prove Lemma \ref{vanishing_martingales} we shall use results from \cite{JacodShiryaev}, some of which we report here for clarity. Recall that a {local square integrable martingale} is a stochastic process which belongs to the localized class of square integrable martingales (see (\cite{JacodShiryaev}; I.1.33 p. 8)).}
	
	\begin{lemma}[(\cite{JacodShiryaev}; VI.3.33 p. 353)]
		\label{sums_of_c_tight}
		{If $(Y^n)_{n \in \mathbb{N}},(Z^n)_{n \in \mathbb{N}}$ are two C-tight sequences of stochastic processes, then $(Y^n+Z^n)_{n \in \mathbb{N}}$ is C-tight.}
	\end{lemma}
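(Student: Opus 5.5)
The plan is to prove the two defining properties of C-tightness separately: tightness of $(Y^n+Z^n)_{n\in\mathbb{N}}$, and the fact that every limit law is carried by continuous paths. The most efficient route is through the modulus-of-continuity characterisation of C-tightness in (\cite{JacodShiryaev}; VI.3.26 p. 351). A sequence $(V^n)_{n\in\mathbb{N}}$ of c\`{a}dl\`{a}g processes is C-tight if and only if two conditions hold. First, for every $N\in\mathbb{N}$ and $\varepsilon>0$ there is $K$ with $\sup_n\mathbf{P}(\sup_{t\leq N}\|V^n_t\|>K)\leq\varepsilon$. Second, for every $N\in\mathbb{N}$ and $\eta,\varepsilon>0$ there are $\theta>0$ and $n_0$ with $\mathbf{P}(w_N(V^n,\theta)\geq\eta)\leq\varepsilon$ for $n\geq n_0$. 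Here $w_N(y,\theta)=\sup\{\|y_s-y_t\|:s,t\leq N,\,|s-t|\leq\theta\}$ is the ordinary (uniform) modulus of continuity. I would take this characterisation as given and apply it to $Y^n$ and $Z^n$ to obtain both conditions for each sequence.

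The key observation is that both quantities are subadditive. We have
$$\sup_{t\leq N}\|Y^n_t+Z^n_t\|\leq \sup_{t\leq N}\|Y^n_t\|+\sup_{t\leq N}\|Z^n_t\|$$
and $w_N(Y^n+Z^n,\theta)\leq w_N(Y^n,\theta)+w_N(Z^n,\theta)$. For the first condition, choose $K_1$ and $K_2$ that work for $Y^n$ and $Z^n$ with $\varepsilon/2$, and set $K=K_1+K_2$. A union bound then gives $\mathbf{P}(\sup_{t\leq N}\|Y^n_t+Z^n_t\|>K)\leq\varepsilon$. For the second condition, choose $\theta$ as the minimum of the two $\theta$'s obtained with $\eta/2$ and $\varepsilon/2$, and $n_0$ as the maximum of the two thresholds. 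This is legitimate because $w_N(\cdot,\theta)$ is nondecreasing in $\theta$. A union bound again gives $\mathbf{P}(w_N(Y^n+Z^n,\theta)\geq\eta)\leq\varepsilon$ for $n\geq n_0$. Applying the characterisation in the converse direction then yields C-tightness of $(Y^n+Z^n)_{n\in\mathbb{N}}$.

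The main point to check is that the criterion uses the uniform modulus $w_N$ rather than the Skorokhod modulus $w'_N$. This matters because $w'_N$ is not subadditive: two processes can each have jumps that are harmless in the Skorokhod sense, yet their sum can fail to be tight. This failure is exactly why sums of merely tight sequences need not be tight, and why continuity of the limits is essential here.

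If one preferred to avoid the modulus criterion, an alternative is available. By Prokhorov, every subsequence of the pair $(Y^n,Z^n)$ has a further subsequence converging jointly in $\mathbb{D}_{\mathbb{R}^{2d}}$. The pair is tight because each marginal is C-tight, and for limits concentrated on continuous paths the joint Skorokhod topology agrees with the product topology (\cite{JacodShiryaev}; VI.1.17 p. 328). Addition is continuous at pairs of continuous functions, so the continuous mapping theorem gives convergence of $Y^n+Z^n$ along that subsequence to a continuous limit, which yields C-tightness. I would nonetheless present the modulus argument above, since it is the more direct of the two.
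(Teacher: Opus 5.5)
Your proof is correct. Subadditivity of $\sup_{t\leq N}\|\cdot\|$ and of the uniform modulus $w_N(\cdot,\theta)$, fed into the C-tightness criterion of Jacod--Shiryaev VI.3.26, gives the claim directly. The paper gives no proof of its own; it only cites Jacod--Shiryaev VI.3.33, which is the standard modulus-of-continuity result of exactly this kind, so your argument essentially supplies the missing textbook proof.
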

	
	\begin{lemma}[(\cite{JacodShiryaev}; VI.4.13 p. 358)]
		\label{c_tight_martingales}
		{If $(Y^n)_{n \in \mathbb{N}}$ are local square integrable martingales such that $Y^n_0=0$ for all $n$ (i.e. starting at zero), then $(Y^n)_{n \in \mathbb{N}}$ is tight if $G^n:=\sum_{k\leq d}\langle Y^{n,k} \rangle$ is C-tight.}
	\end{lemma}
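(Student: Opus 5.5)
The statement is Lemma \ref{c_tight_martingales}, quoted from (\cite{JacodShiryaev}; VI.4.13 p. 358). The plan is to verify Aldous' tightness criterion for $(Y^n)_{n\in\mathbb{N}}$, using the Lenglart domination inequality to transfer control from $G^n$ to $Y^n$. We may assume $G^n$ is a predictable increasing process with $G^n_0=0$, since it is a sum of predictable quadratic variations. Aldous' criterion (\cite{JacodShiryaev}; VI.4.5) asks for two conditions:
\begin{enumerate}
\item[(i)] for each $N$, $\lim_{a\to\infty}\limsup_n \mathbf{P}(\sup_{t\leq N}\|Y^n_t\|>a)=0$;
\item[(ii)] for each $N$ and $\varepsilon>0$, $\lim_{\theta\downarrow 0}\limsup_n \sup_{S\leq T\leq S+\theta} \mathbf{P}(\|Y^n_T-Y^n_S\|>\varepsilon)=0$, where $S,T$ range over stopping times bounded by $N$.
\end{enumerate}

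For (i), apply Lenglart's inequality to the nonnegative process $\|Y^n\|^2=\sum_k (Y^{n,k})^2$. It is L-dominated by the predictable increasing process $G^n$, because $(Y^{n,k})^2-\langle Y^{n,k}\rangle$ is a local martingale and $Y^n_0=0$. This gives, for all $a,b>0$,
$$
\mathbf{P}\Big(\sup_{t\leq N}\|Y^n_t\|^2\geq a\Big)\leq \frac{b}{a}+\mathbf{P}(G^n_N\geq b).
$$
Tightness of $(G^n_N)_{n}$ follows from the C-tightness of $(G^n)$. So we first pick $b$ large enough that $\limsup_n\mathbf{P}(G^n_N\geq b)$ is small, and then let $a\to\infty$.

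For (ii), use the same argument for the process shifted at $S$. Set $\tilde Y_t=Y^n_{(S+t)\wedge T}-Y^n_S$. It is a local square integrable martingale for the filtration $(\mathscr{F}_{S+t})_{t\geq 0}$, it starts at $0$, and it is L-dominated by $G^n_{(S+t)\wedge T}-G^n_S$. Lenglart then gives
$$
\mathbf{P}(\|Y^n_T-Y^n_S\|\geq\varepsilon)\leq \frac{\eta}{\varepsilon^2}+\mathbf{P}(G^n_T-G^n_S\geq \eta).
$$
The remaining task, and the main obstacle, is to show that $\lim_{\theta\downarrow0}\limsup_n\sup_{S\leq T\leq S+\theta}\mathbf{P}(G^n_T-G^n_S\geq\eta)=0$. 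This is where C-tightness (and not merely tightness) of $G^n$ is needed. C-tightness gives the modulus-of-continuity condition
$$
\lim_{\theta\downarrow 0}\limsup_n \mathbf{P}\big(w_{N+1}(G^n,\theta)>\eta\big)=0,
$$
where $w_{N+1}$ is the uniform modulus of continuity on $[0,N+1]$. Since $T-S\leq\theta$ with $T\leq N$, we have $G^n_T-G^n_S\leq w_{N+1}(G^n,\theta)$, and this bound is uniform over all admissible pairs $S,T$. Choosing first $\eta$ small and then $\theta$ small completes (ii).

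One point to check carefully is that Lenglart's inequality applies at stopping times and not only at fixed times. This holds because domination is preserved under stopping and under the time shift by $S$; the shift by $S$ is handled through the optional stopping theorem for the localized martingales. With (i) and (ii) established, Aldous' criterion yields tightness of $(Y^n)_{n\in\mathbb{N}}$.
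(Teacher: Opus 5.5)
Your argument is correct. The paper gives no proof of its own and simply cites Jacod--Shiryaev VI.4.13, and your route is exactly the standard proof of that theorem: Aldous' criterion VI.4.5, Lenglart's inequality I.3.30 for $\|Y^n\|^2$ L-dominated by $G^n$, and the uniform-modulus characterisation VI.3.26 of C-tightness. One minor simplification: you can skip the time shift to $(\mathscr{F}_{S+t})_{t\geq 0}$ and apply Lenglart directly to $Y^n_{\cdot\wedge T}-Y^n_{\cdot\wedge S}$. That process is a local square integrable martingale in the original filtration, null at $0$, and its squared norm is L-dominated by the predictable increasing process $G^n_{\cdot\wedge T}-G^n_{\cdot\wedge S}$.
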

	
	\begin{lemma} 
		\label{vanishing_martingales}	
		If $(Y^n)_{n \in \mathbb{N}}$ are continuous $\mathbb{R}^d$-valued local martingales starting at $0$ and $\langle Y^{n,k} \rangle_t \to^\mathbf{P} 0$ for all $ k \in \{1,..,d\}, t\geq 0$, then $Y^n\to^\textnormal{ucp} 0$.
	\end{lemma}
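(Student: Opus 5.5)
The plan is to use the C-tightness machinery just recalled, namely Lemma \ref{c_tight_martingales} and Lemma \ref{sums_of_c_tight}, and then identify every limit point as the zero process. Once every subsequential weak limit is shown to be $0$, tightness gives $Y^n\Rightarrow 0$. Since the zero process is continuous, Lemma \ref{wconv_to_pconv_constant_processes} with $c\equiv 0$ upgrades this to $Y^n\to^{\textnormal{ucp}}0$.

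\textbf{Step 1: C-tightness of the brackets.} Since each $Y^n$ is a continuous local martingale, each $Y^n$ is a local square integrable martingale, localising by $\inf\{t:\|Y^n_t\|\geq m\}$. The brackets $\langle Y^{n,k}\rangle$ are continuous, nondecreasing and start at $0$. Fix $N$. By monotonicity,
$$\sup_{t\leq N}|\langle Y^{n,k}\rangle_t| = \langle Y^{n,k}\rangle_N \to^{\mathbf P} 0.$$
Hence $\langle Y^{n,k}\rangle\to^{\textnormal{ucp}}0$. By Lemma \ref{JS_VI_3_31_352} with $H^n=0$, this gives $\langle Y^{n,k}\rangle\Rightarrow 0$. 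A weakly convergent sequence is tight, and its only limit, $0$, is continuous, so each sequence $(\langle Y^{n,k}\rangle)_n$ is C-tight. Applying Lemma \ref{sums_of_c_tight} finitely many times, $G^n=\sum_{k\leq d}\langle Y^{n,k}\rangle$ is C-tight. Lemma \ref{c_tight_martingales} then yields tightness of $(Y^n)$.

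\textbf{Step 2: identifying limit points.} Take a subsequence along which $Y^n\Rightarrow Y$. Because each $Y^n$ is continuous, $Y$ is continuous, since jumps are continuous functionals on compact time sets. It then suffices to show that the one-dimensional marginals vanish, i.e. $Y^n_t\to^{\mathbf P}0$ for each $t$. Finite-dimensional distributions of the limit are determined at all times outside a countable set, so this forces all marginals of $Y$ to be $\delta_0$. By path continuity $Y\equiv 0$.

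To get $Y^{n,k}_t\to^{\mathbf P}0$ I would use the Lenglart-type inequality for continuous local martingales: for $\varepsilon,\eta>0$,
$$\mathbf P\Big(\sup_{s\leq t}|Y^{n,k}_s|>\varepsilon\Big)\leq \frac{\eta}{\varepsilon^2}+\mathbf P(\langle Y^{n,k}\rangle_t>\eta).$$
This follows by stopping at $\inf\{s:\langle Y^{n,k}\rangle_s>\eta\}$ and applying Doob's $L^2$ inequality. Letting $n\to\infty$ and then $\eta\downarrow 0$ gives the claim.

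This inequality in fact gives u.c.p. convergence directly, so Step 2 could replace the tightness route entirely. I would present it that way as the cleanest argument and keep Step 1 only as context. The main obstacle is the localisation in the Lenglart bound: the stopped martingale $Y^{n,k}_{\cdot\wedge\rho}$, with $\rho=\inf\{s:\langle Y^{n,k}\rangle_s>\eta\}$, is an $L^2$-bounded martingale with $\mathbf E[(Y^{n,k}_{t\wedge\rho})^2]\leq\eta$. This uses continuity of the bracket, which gives $\langle Y^{n,k}\rangle_\rho\leq\eta$, and a further localisation to pass from local martingale to martingale via Fatou's lemma. Finally, $\sup_{t\leq N}\|Y^n_t\|\leq\sum_k\sup_{t\leq N}|Y^{n,k}_t|$ handles the vector case.
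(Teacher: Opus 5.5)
Your proposal is correct, and the argument you say you would actually present differs from the paper's. Your Step 1 coincides with the paper's proof: monotonicity of the brackets gives u.c.p.\ convergence, hence C-tightness of each $\langle Y^{n,k}\rangle$, then of $G^n$ by Lemma \ref{sums_of_c_tight}, then tightness of $(Y^n)$ by Lemma \ref{c_tight_martingales}. The paper identifies the limit by citing a finite-dimensional convergence result of Jacod--Shiryaev (VIII.1.9) for each component. It then gets $Y^n\Rightarrow 0$ from tightness plus finite-dimensional convergence, and upgrades to u.c.p.\ via Lemma \ref{wconv_to_pconv_constant_processes}. You instead stop at $\rho=\inf\{s:\langle Y^{n,k}\rangle_s>\eta\}$ and apply Doob's inequality, which is Lenglart's inequality for $(Y^{n,k})^2$ dominated by $\langle Y^{n,k}\rangle$. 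This gives $\mathbf P(\sup_{s\leq t}|Y^{n,k}_s|>\varepsilon)\leq \eta/\varepsilon^2+\mathbf P(\langle Y^{n,k}\rangle_t>\eta)$, and hence u.c.p.\ convergence at once. The two facts this needs, $\{\langle Y^{n,k}\rangle_t\leq\eta\}\subset\{\rho\geq t\}$ and $\mathbf E[(Y^{n,k}_{t\wedge\rho})^2]\leq\eta$, both hold as you describe. Your route makes tightness, the Skorokhod-space arguments, and Lemmas \ref{sums_of_c_tight}, \ref{c_tight_martingales} and \ref{wconv_to_pconv_constant_processes} unnecessary for this lemma. The paper's route stays inside the weak-convergence toolbox it has just assembled, but it leans on several external results. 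Yours is self-contained and quantitative. Replacing your stopping argument by the general Lenglart inequality with a predictable dominating process, it also extends to locally square integrable martingales with jumps. One minor point in the part you would discard: continuity of a limit point is better justified by the closedness of $C_{\mathbb{R}^d}[0,\infty)$ in the Skorokhod space together with the portmanteau theorem. The reason is that $y\mapsto\sup_{s\leq N}\|\Delta y(s)\|$ is not Skorokhod-continuous at paths that jump at time $N$.
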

	\begin{proof} By (\cite{JacodShiryaev}; I.4.1 p. 38), the processes $Y^n$ are local square integrable martingales. Since $t\mapsto \langle Y^{n,k} \rangle_t$ are nondecreasing processes, $\langle Y^{n,k} \rangle_t \to^\mathbf{P} 0$ for all $t\geq 0$ implies $\langle Y^{n,k} \rangle\to^\textnormal{ucp}0$ 
		- this in turn implies C-tightness of each $\langle Y^{n,k} \rangle$. By { 	 Lemma \ref{sums_of_c_tight}} the sequence of processes given by $G^n:=\sum_{k\leq d}\langle Y^{n,k} \rangle$ is C-tight. Tightness of the original sequence $(Y^n)_{n \in \mathbb{N}}$ then follows from {  Lemma \ref{c_tight_martingales}}. For $k\in \{1,...,d\}$ fixed, since $\langle Y^{n,k} \rangle\to^\textnormal{ucp}0$ the sequence of components $(Y^{n,k})_{n \in \mathbb{N}}$ converges in the finite-dimensional distributions sense to $0$ by (\cite{JacodShiryaev}; VIII.1.9 p. 458). So for each $t\geq 0$ fixed we have $Y^{n,k}_t\to^\mathbf{P}0$ for all $k$ and so $Y^n_t=(Y^{n,1}_t,...,Y^{n,d}_t)\to^\mathbf{P}0$. Then, the convergence of finite-dimensional distributions for $(Y^n)_{n \in \mathbb{N}}$ follows because $(Y^n_{t_1},...,Y^n_{t_m})\to^\mathbf{P}0$ for any $\{t_1,...,t_m\}$. So we have $Y^n\Rightarrow 0$ and we conclude by Lemma \ref{wconv_to_pconv_constant_processes}.
	\end{proof}
	
	\section{Proofs of the main results}
	
	\subsection{Weak convergence in the finite-dimensional distributions sense}
	
	We shall prove Theorem \ref{stopped_langevin_fdd_NEW} at the end of this section. First, we recall the small-time functional central limit theorem of \cite{Gerholdetal} when applied to diffusions described by It\^{o} SDEs. Then, we establish the lemmas for the proof of Theorem \ref{stopped_langevin_fdd_NEW}.
	
	\begin{proposition}[(\cite{Gerholdetal}; Th. 3 p. 725)]
		\label{Gerhold}
		In the sense of weak convergence of stochastic processes, we have $n^{1/2}(f(X_{./n})-f(x))\Rightarrow J_f(x)\sigma(x)W$ as $n \to \infty$ where $W$ is a standard $\mathbb{R}^d$-Brownian motion.
	\end{proposition}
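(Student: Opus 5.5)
This is Proposition \ref{Gerhold}, the small-time functional CLT of \cite{Gerholdetal} specialised to It\^{o} diffusions. We can take it from that reference, but we sketch how it could be proved directly from the lemmas above. Let $Y^n_t:=n^{1/2}(X_{t/n}-x)$. Since $W^{(n)}_t:=n^{1/2}W_{t/n}$ is again a standard Brownian motion, we can write
$$
Y^n_t=n^{-1/2}\int_0^t\mu(X_{s/n})ds+\int_0^t\sigma(X_{s/n})dW^{(n)}_s .
$$
We then split $Y^n=H^n+R^n$, with $H^n:=\sigma(x)W^{(n)}$ and remainder
$$
R^n:=n^{-1/2}\int_0^\cdot\mu(X_{s/n})ds+\int_0^\cdot(\sigma(X_{s/n})-\sigma(x))dW^{(n)}_s .
$$
The law of $H^n$ equals that of $\sigma(x)W$ for every $n$, so $H^n\Rightarrow\sigma(x)W$ trivially. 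By Lemma \ref{JS_VI_3_31_352}, it therefore suffices to show $R^n\to^{\textnormal{ucp}}0$.

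\textbf{Drift term.} By continuity of the paths of $X$ and of $\mu$, $\sup_{s\le N}\|\mu(X_{s/n})\|$ is bounded in probability, in fact a.s. bounded uniformly in $n\geq 1$ by $\sup_{u\le N}\|\mu(X_u)\|$. Hence the drift term is at most $n^{-1/2}N\sup_{u\le N}\|\mu(X_u)\|\to0$ a.s. uniformly on $[0,N]$.

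\textbf{Martingale term.} Each component of $M^n:=\int_0^\cdot(\sigma(X_{s/n})-\sigma(x))dW^{(n)}_s$ is a continuous local martingale starting at $0$, with
$$
\langle M^{n,k}\rangle_t=\int_0^t\sum_j(\sigma_{kj}(X_{s/n})-\sigma_{kj}(x))^2ds\le t\sup_{u\le t/n}\|\sigma(X_u)-\sigma(x)\|^2\to 0
$$
a.s., by path continuity at time $0$ and continuity of $\sigma$. Lemma \ref{vanishing_martingales} then gives $M^n\to^{\textnormal{ucp}}0$. Combining the two terms, $n^{1/2}(X_{\cdot/n}-x)\Rightarrow\sigma(x)W$.

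\textbf{Passing to $f$.} By It\^{o}'s formula, $f_k(X_{t/n})-f_k(x)$ equals a $dt$-integral of $\nabla f_k\cdot\mu+\tfrac12\mathrm{tr}(\sigma'H_{f_k}\sigma)$, plus $\int\nabla f_k(X)'\sigma(X)dW$. After scaling by $n^{1/2}$, the $dt$-part is $O(n^{-1/2})$ uniformly on compacts, by the same argument as the drift term. The stochastic part equals $\int_0^t\nabla f_k(X_{s/n})'\sigma(X_{s/n})dW^{(n)}_s$. Comparing it with $\nabla f_k(x)'\sigma(x)W^{(n)}_t$ via Lemma \ref{vanishing_martingales}, the difference has vanishing bracket, because $\nabla f_k$ and $\sigma$ are continuous.

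So $n^{1/2}(f(X_{\cdot/n})-f(x))=J_f(x)\sigma(x)W^{(n)}+\text{(u.c.p.-null)}$, and Lemma \ref{JS_VI_3_31_352} concludes. The only step needing care is the bracket estimate, which rests on continuity of the integrands at time $0$. This is immediate here, since all coefficients are continuous and $X$ has continuous paths.
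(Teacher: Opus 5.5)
Your argument is correct. Note that the paper gives no proof of this proposition: it simply imports Theorem 3 of \cite{Gerholdetal}, a small-time functional CLT for general semimartingales under continuity-at-zero conditions on their characteristics.

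You instead give a self-contained proof for the continuous It\^{o}-diffusion case. Its ingredients are:
\begin{itemize}
\item the Brownian rescaling $W^{(n)}_s=n^{1/2}W_{s/n}$, which is a Brownian motion for the filtration $(\mathscr{F}_{s/n})_{s\geq 0}$; this is enough for Lemma \ref{vanishing_martingales}, which does not need a common filtration across $n$;
\item It\^{o}'s formula;
\item the bracket bound $t\sup_{u\le t/n}\|\nabla f_k(X_u)'\sigma(X_u)-\nabla f_k(x)'\sigma(x)\|^2\to 0$;
\item Lemma \ref{JS_VI_3_31_352}.
\end{itemize}

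The citation buys generality (processes with jumps, general semimartingale characteristics) at no cost in the text. Your route is self-contained and uses only tools already in the paper. It also makes visible that Assumption \ref{diffu} plays no role at this stage. In fact it proves more than weak convergence: it gives the coupling $n^{1/2}(f(X_{\cdot/n})-f(x))-J_f(x)\sigma(x)W^{(n)}\to^{\textnormal{ucp}}0$ on the original probability space.

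Your first step, treating $X$ itself, is the special case $f=\mathrm{id}$ of your second step, so it could be dropped.
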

	
	\begin{lemma} 
		\label{local}
		If Assumption \ref{diffu} holds, then for all $t>0$ we have 
		$$\mathbf{P}\bigg(\sup_{s\leq t}\|X_s-x\|=0\bigg)=0
		$$
	\end{lemma}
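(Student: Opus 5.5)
Since $\{\sup_{s\leq t}\|X_s-x\|=0\}\subseteq\{X^k_s=x_k \text{ for all } s\leq t\}$ for the index $k$ of Assumption \ref{diffu}, it suffices to show that the $k$-th coordinate cannot stay constant on $[0,t]$ with positive probability. Let $\rho:=\inf\{s:\|X_s-x\|\geq 1\}\wedge t$, which is a stopping time, and write $X^k_{s\wedge\rho}-x_k=A_s+M_s$ with
$$A_s:=\int_0^{s\wedge\rho}\mu_k(X_u)du,\qquad M_s:=\int_0^{s\wedge\rho}\sum_{j\leq d}\sigma_{k,j}(X_u)dW^j_u .$$
Here $M$ is a continuous local martingale with $\langle M\rangle_s=\int_0^{s\wedge\rho}(\sigma\sigma')_{k,k}(X_u)du$. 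Set $c:=\inf_{\|y-x\|\leq 1}(\sigma\sigma')_{k,k}(y)>0$. On the event $E:=\{\sup_{s\leq t}\|X_s-x\|=0\}$ we have $\rho=t$, so $\langle M\rangle_t\geq ct>0$ on $E$.

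On $E$ the path is frozen at $x$, so $\mu_k(X_u)=\mu_k(x)$ and $A_s=\mu_k(x)s$ for $s\leq t$. Since $X^k_s-x_k=0$ there as well, we get $M_s=-\mu_k(x)s$ on $[0,t]$. Consequently $M$ has finite (zero-quadratic) variation on $[0,t]$ on $E$. A continuous local martingale satisfies $\langle M\rangle_t=\lim\sum (M_{t_{i+1}}-M_{t_i})^2$ in probability along partitions with mesh tending to zero. Passing to an a.s. convergent subsequence, for a.e. $\omega\in E$ the limit equals $\lim\sum \mu_k(x)^2(t_{i+1}-t_i)^2=0$. This contradicts $\langle M\rangle_t\geq ct>0$ on $E$, so $\mathbf{P}(E)=0$.

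The step needing care is the identification of the quadratic variation of $M$ pathwise on the event $E$. This is handled by the standard fact that realized quadratic variation converges in probability to $\langle M\rangle$ for continuous local martingales, which may be localized using $\rho$ and $\langle M\rangle$-bounded stopping times. Along a subsequence the convergence then holds a.s. simultaneously on $E$. An alternative route avoiding partitions is to note that on $E$ we have $\langle M\rangle_t=\langle A+M\rangle_t=\langle X^k_{\cdot\wedge\rho}\rangle_t$. Since the latter is the pathwise limit of squared increments of a path constant on $[0,t]$, it equals $0$ a.s. on $E$, which gives the same contradiction.
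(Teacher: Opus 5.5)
Your proof is correct, but it takes a different route from the paper.

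\textbf{The paper's route.} The paper argues via local times of the $k$-th coordinate. It takes the version of $(a,t)\mapsto L^a_t(X_k)$ that is c\`{a}dl\`{a}g in $a$ and uses the a.s.\ representation $L^{x_k}_t(X_k)=\lim_{\varepsilon\to 0}\varepsilon^{-1}\int_0^t\mathbf{1}_{[x_k,x_k+\varepsilon)}(X_{k,s})\,d\langle X_k\rangle_s$. On the frozen event the integral no longer depends on $\varepsilon$ and is bounded below by a positive constant. The limit is therefore $+\infty$, which contradicts finiteness of local time.

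\textbf{Your route.} You read the contradiction off the bracket directly. On $E$ the realized quadratic variation of the affine path $s\mapsto-\mu_k(x)s$ (or of the constant path $X^k$) vanishes, while $\langle M\rangle_t=t(\sigma\sigma')_{k,k}(x)>0$ there.

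\textbf{Comparison.} Both proofs rest on the same observation: a strictly positive bracket on $E$ against a path that does not move. Yours needs only the convergence in probability of sums of squared increments to the bracket. The paper's needs the occupation-times formula and the regularity of local time in the space variable. Your argument also avoids the paper's detour through $\tilde{\tau}^1(X)$ and $\mathbf{E}[\tilde{\tau}^1(X)]<\infty$. It makes plain that only $(\sigma\sigma')_{k,k}(x)>0$ is actually used, since on $E$ the process sits at $x$; the infimum over the unit ball and the stopping time $\rho$ are not needed.

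\textbf{Two small remarks.} First, convergence of realized quadratic variation in probability holds for any continuous semimartingale on a fixed interval $[0,t]$, so the extra localization you mention is unnecessary. Second, your ``alternative route'' applied directly to $X^k$, whose bracket equals $\langle M\rangle$, is the cleanest version. It does not really avoid partitions, though; it only avoids splitting off the drift.
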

	\begin{proof} Denote $X_t=(X_{1,t},...,X_{d,t})$ and $\sigma_k$ is the $k$-th row of $\sigma$. We get that {$d[X_k,X_k]_t=\|\sigma_k(X_t)\|^2dt$} and that, for each $k$, the local time process at $x_k$ of the $k$-th component is $L^{x_k}(X_k)$. For each $k$, the process $(a,t)\mapsto L^{a}_{t}(X_k)$ has a modification process which is c\`{a}dl\`{a}g in $a$ and continuous in $t$ a.s. (see e.g. (\cite{RevuzYor}; VI.1.7 p. 225)), and we shall work with such. We then also know that
		$$
		L^{x_k}_{t}(X_k)=\lim_{\varepsilon\to 0}\frac{1}{\varepsilon}\int_0^{t}\mathbf{1}_{[x_k,x_k+\varepsilon)}(X_{k,s})\|\sigma_k(X_s)\|^2ds
		$$
		on a set $\overline{\Omega}$ {such that} $\mathbf{P}(\overline{\Omega})=1$ (see e.g. (\cite{RevuzYor}; VI.1.9 p. 227)). Suppose
		{$$\omega\in\Big\{\sup_{s\leq t}\|X_s-x\|=0\Big\}\cap \overline{\Omega}\cap\{0<\tilde{\tau}^1(X)<\infty\}$$}
		Recall that by Assumption \ref{diffu}, $\mathbf{E}[\tilde{\tau}^1(X)]<\infty$ and that we have $\tilde{\tau}^1(X)>0$ a.s by definition of $\tilde{\tau}^1(X)$, so $\mathbf{P}(0<\tilde{\tau}^1(X)<\infty)=1$. Then, for some $k \in \{1,2,...,d\}$,
		$$
		\begin{aligned}
		L^{x_k}_{t}(X_k)(\omega)&=\lim_{\varepsilon\to 0}\frac{1}{\varepsilon}\int_0^{t}\mathbf{1}_{[x_k,x_k+\varepsilon)}(X_{k,s}(\omega))\|\sigma_k(X_s(\omega))\|^2ds\\
		&\geq \lim_{\varepsilon\to 0}\frac{1}{\varepsilon}\int_0^{t\wedge{\tilde{\tau}^1(X)(\omega)}}\|\sigma_k(X_s(\omega))\|^2ds\\
		&\geq \lim_{\varepsilon\to 0} \Big(\inf_{y:\|y-x\|\leq 1}(\sigma\sigma')_{k,k}(y)\Big)\frac{t\wedge \tilde{\tau}^1(X)(\omega)}{ \varepsilon}=\infty
		\end{aligned}
		$$
		and therefore $a\mapsto L^{a}_{t}(X_k)(\omega)$ cannot be c\`{a}dl\`{a}g at $x_k$. So $\omega \notin \{a\mapsto L^{a}_{t}(X_k)\textrm{ is c\`{a}dl\`{a}g}\}$, the complement of which has probability zero, and we conclude.
	\end{proof}
	
	\begin{lemma} 
		\label{vanish}
		Under Assumption \ref{diffu} we have $\tilde{\tau}^n(X)\downarrow 0$ a.s.
	\end{lemma}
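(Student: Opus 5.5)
The claim is that $\tilde{\tau}^n(X)\downarrow 0$ almost surely. I will establish monotonicity first and then identify the limit.

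\emph{Monotonicity.} The radii $n^{-1/2}$ decrease in $n$, $X$ has continuous paths and $X_0=x$. So before $X$ can reach distance $n^{-1/2}$ from $x$ it must reach distance $(n+1)^{-1/2}$, by the intermediate value theorem applied to $s\mapsto\|X_s-x\|$. Hence $\tilde{\tau}^{n+1}(X)\leq\tilde{\tau}^n(X)$ pathwise. The sequence is nonincreasing and nonnegative, so the limit $\tau^\infty:=\lim_n\tilde{\tau}^n(X)\geq 0$ exists a.s. It remains to show $\tau^\infty=0$ a.s.

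\emph{Identifying the limit.} Fix $t>0$ and work on the event $\{\tau^\infty>t\}$. There $\tilde{\tau}^n(X)>t$ for every $n$, so $\sup_{s\leq t}\|X_s-x\|\leq n^{-1/2}$ for all $n$. Letting $n\to\infty$ gives $\sup_{s\leq t}\|X_s-x\|=0$. Therefore
$$\mathbf{P}(\tau^\infty>t)\leq \mathbf{P}\Big(\sup_{s\leq t}\|X_s-x\|=0\Big)=0$$
by Lemma \ref{local}, which uses Assumption \ref{diffu}. Taking $t=1/m$ and a countable union over $m\in\mathbb{N}$ gives $\mathbf{P}(\tau^\infty>0)=0$, so $\tilde{\tau}^n(X)\downarrow 0$ a.s.

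\emph{Remaining detail.} One must check that the stopping times are well defined: on the event $\tilde{\tau}^n(X)=\infty$ the inequality $\sup_{s\leq t}\|X_s-x\|\leq n^{-1/2}$ still holds for all $t$, so the argument above covers that case too. The only substantive input is Lemma \ref{local}, which rules out the path remaining frozen at $x$ on an initial interval; everything else is continuity of paths.
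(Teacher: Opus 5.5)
Your proof is correct and follows the same route as the paper. On the event that the exit times do not vanish, the path must stay frozen at $x$ on an initial interval, and you rule this out with Lemma \ref{local} plus a countable union over times $1/m$, where the paper uses positive rationals. Your intermediate-value argument for $\tilde{\tau}^{n+1}(X)\leq\tilde{\tau}^n(X)$ is a useful addition, since the paper proves only convergence to $0$ and leaves the monotonicity implicit.
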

	\begin{proof} Let $\omega \in \{\tilde{\tau}^n(X)\not\to 0\}$. Then there exists $\varepsilon>0$ {such that} for all $N \in \mathbb{N}$ there exists $n\geq N$ {for which} $\tilde{\tau}^n(X)(\omega)>\varepsilon$. Then $\|X_t(\omega)-x\|<n^{-1/2}$ for all $t\leq \varepsilon$, and so $\|X_t(\omega)-x\|=0$ for all $t\leq \varepsilon$. Therefore:
		$$
		\begin{aligned}\mathbf{P}(\tilde{\tau}^n(X)\not\to 0)&\leq \mathbf{P}\bigg(\exists \varepsilon >0:\sup_{t\leq \varepsilon}\|X_t-x\|=0\bigg)\\
		&\leq \sum_{q \in  \mathbb{Q}^+}\mathbf{P}\bigg(\sup_{t\leq q}\|X_t-x\|=0\bigg)
		\end{aligned}
		$$
		and by Lemma \ref{local} we conclude.
	\end{proof}
	
	We remark that we suppose that Assumption \ref{diffu} holds in the following.
	
	\begin{lemma} 
		\label{weak_conv_times}
		We have
		$$
		n\tilde{\tau}^n(X)\,{\stackrel{\textnormal{w}}{\to}}\,\tau^1(\sigma(x)W):=\inf\{t:\|\sigma(x)W_t\|=1\}
		$$
		{as $n\to \infty$.}
	\end{lemma}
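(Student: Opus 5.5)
The plan is to write $n\tilde{\tau}^n(X)$ as the first exit time of a process converging to $\sigma(x)W$, and then apply Lemma \ref{conv_stopping_BM}. Apply Proposition \ref{Gerhold} with $\ell=d$ and $f=\mathrm{id}$, so that $J_f=I$. This gives
$$
Y^n:=n^{1/2}(X_{\cdot/n}-x)\Rightarrow \sigma(x)W .
$$
Next, rescale time in the definition of $\tilde{\tau}^n$. We have $\|X_s-x\|=n^{-1/2}$ if and only if $\|Y^n_{ns}\|=1$. Hence
$$
n\tilde{\tau}^n(X)=\inf\{u:\|Y^n_u\|=1\}.
$$
Since $Y^n$ has continuous paths and $Y^n_0=0$, the first hitting time of the sphere of radius $1$ equals the first time the running supremum reaches $1$. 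Therefore $n\tilde{\tau}^n(X)=\tau^1(Y^n)$ exactly, with the convention $\inf\emptyset=\infty$ on both sides.

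Lemma \ref{conv_stopping_BM} with $\Sigma=\sigma(x)$ and $r=1$ then yields $\tau^1(Y^n)\to^{\textnormal{w}}\tau^1(\sigma(x)W)$. The lemma needs $(\sigma(x)\sigma(x)')_{k,k}>0$ for some $k$, and this follows from Assumption \ref{diffu} at $y=x$. It remains to identify $\tau^1(\sigma(x)W)$ with $\inf\{t:\|\sigma(x)W_t\|=1\}$. This holds by the same path-continuity argument, since $\sigma(x)W_0=0$.

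The expected obstacle is that $\tau^1(\sigma(x)W)$ may be $+\infty$. In that case weak convergence should be read on $[0,\infty]$, and the continuity statement of \cite{JacodShiryaev} VI.2.11 must cover it. Under the assumption, the $k$-th coordinate $(\sigma(x)W)_k$ is a nondegenerate one-dimensional Brownian motion. It therefore exits $[-1,1]$ in finite time a.s., which forces $\|\sigma(x)W\|$ to reach $1$. So $\tau^1(\sigma(x)W)<\infty$ a.s., and the limit is a genuine finite random variable.

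The remaining check is that the identity $n\tilde{\tau}^n(X)=\tau^1(Y^n)$ holds pathwise. This is immediate from continuity of $X$, so no additional estimates are needed.
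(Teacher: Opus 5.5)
Your proposal is correct and follows essentially the same route as the paper. The paper also rewrites $n\tilde{\tau}^n(X)$, using path continuity, as the first exit time of $n^{1/2}(X_{\cdot/n}-x)$ from the unit ball. It then combines Proposition \ref{Gerhold} (implicitly with $f=\mathrm{id}$) with Lemma \ref{conv_stopping_BM}; your additional checks that Assumption \ref{diffu} at $y=x$ gives the nondegeneracy needed for Lemma \ref{conv_stopping_BM}, and that $\tau^1(\sigma(x)W)<\infty$ a.s., are details the paper leaves implicit.
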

	\begin{proof} Note $n\tilde{\tau}^n(X)=\inf\{t:\|n^{1/2}(X_{t/n}-x)\|=1\}$.	Since the paths of $X$ are continuous, the claim follows by Lemma \ref{conv_stopping_BM} and Proposition \ref{Gerhold}.
	\end{proof}
	
	\begin{proposition}
		\label{vanishing_times_jacob_diffusion} 
		We define the sequence of scaled processes
		$$
		\tilde{X}^{n}:=n^{1/2}((f(X_{\tilde{\tau}^n(X)\wedge .})-f(x))-J_f(x)(X_{\tilde{\tau}^n(X)\wedge .}-x))
		$$
		Then $\tilde{X}^{n} \to^\textnormal{ucp} 0$.
	\end{proposition}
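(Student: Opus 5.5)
The simplest route is to avoid stochastic calculus and control $\tilde X^n$ pathwise with a second-order Taylor bound, because the stopped process never leaves the ball of radius $n^{-1/2}$ around $x$. For every $t\geq 0$ we have $\|X_{\tilde{\tau}^n(X)\wedge t}-x\|\leq n^{-1/2}\leq 1$. This holds because $X$ has continuous paths and $\tilde{\tau}^n(X)$ is the first hitting time of the sphere of radius $n^{-1/2}$; if $\tilde{\tau}^n(X)=\infty$ the process simply stays inside the ball. Since $f\in\mathcal{C}^2$, the constant
$$
K:=\max_{k\leq \ell}\sup_{y:\|y-x\|\leq 1}\|H_{f_k}(y)\|
$$
is finite. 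Taylor's theorem with Lagrange remainder, applied componentwise, then gives for every $y$ with $\|y-x\|\leq 1$
$$
|f_k(y)-f_k(x)-\nabla f_k(x)'(y-x)|\leq \tfrac{K}{2}\|y-x\|^2 .
$$

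Next I substitute $y=X_{\tilde{\tau}^n(X)\wedge t}(\omega)$ and multiply by $n^{1/2}$. This yields, surely and uniformly in $t\geq 0$,
$$
\sup_{t\geq 0}\|\tilde X^n_t\|\leq \sqrt{\ell}\,\frac{K}{2}\,n^{1/2}\,n^{-1}=\frac{\sqrt{\ell}K}{2}\,n^{-1/2}\to 0 .
$$
Hence for any $\varepsilon>0$ and $N\in\mathbb{N}$, the probability $\mathbf{P}(\sup_{t\leq N}\|\tilde X^n_t\|>\varepsilon)$ is zero once $n$ is large. This is exactly $\tilde X^n\to^{\textnormal{ucp}}0$, and in fact the convergence is uniform on all of $[0,\infty)$ and deterministic.

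I do not expect a real obstacle here. The only points to check are that the stopped path stays in the closed ball, which follows from path continuity, and that the Hessian is bounded on a fixed compact neighbourhood, which follows from $f\in\mathcal{C}^2$. Neither Assumption \ref{diffu} nor Lemma \ref{vanish} is needed for this statement. If one instead tried to expand $f$ by It\^{o}'s formula and invoke Lemma \ref{vanishing_martingales}, the drift and the martingale remainder would each require separate estimates; the pathwise Taylor argument sidesteps all of that.
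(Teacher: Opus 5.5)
Your proof is correct, and it takes a genuinely different and much more elementary route than the paper's proof.

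The paper applies It\^{o}'s formula to write $\tilde X^{n,k}$ as a stochastic integral with integrand $(\nabla f_k(X_s)-\nabla f_k(x))\cdot\sigma(X_s)$, plus a drift containing $\frac12\textrm{Tr}[\sigma(X_s)'H_{f_k}(X_s)\sigma(X_s)]$. It shows that the brackets vanish in probability by combining two facts:
\begin{itemize}
\item $n\tilde\tau^n(X)\to^{\textnormal{w}}\tau^1(\sigma(x)W)$ (Lemma \ref{weak_conv_times}, which rests on the small-time FCLT of Proposition \ref{Gerhold});
\item $\tilde\tau^n(X)\downarrow 0$ (Lemma \ref{vanish}, which needs Assumption \ref{diffu} and the local-time argument).
\end{itemize}
It then feeds this into Lemma \ref{vanishing_martingales} (tightness via C-tight brackets plus fdd convergence), and bounds the drift by $n^{-1/2}\cdot n\tilde\tau^n(X)$ times a bounded average.

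You instead use the fact that the stopped path is confined to the closed ball of radius $n^{-1/2}$ around $x$. The second-order Taylor remainder then gives the deterministic bound $\sup_{t\geq 0}\|\tilde X^n_t\|\leq \frac{\sqrt{\ell}K}{2}n^{-1/2}$. This is strictly stronger than the statement: you get almost sure uniform convergence on all of $[0,\infty)$ with an explicit rate, not just u.c.p. It also needs neither Assumption \ref{diffu}, nor the FCLT, nor any martingale tightness results. The only correction is that ``surely'' should read ``almost surely'', since the paths of $X$ are continuous only a.s.

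What the paper's route buys is the explicit split into a drift and the martingale part $F^n$, which it reuses in Lemma \ref{martingale_surface_invariance_langevin_NEW} and in the proof of Theorem \ref{th1_NEW}. Your bound, however, also holds at $t=\tilde\tau^n(X)$ itself, which is a.s. finite. It therefore gives $n^{1/2}(f(X_{\tilde\tau^n(X)})-f(x))-J_f(x)n^{1/2}(X_{\tilde\tau^n(X)}-x)\to 0$ directly, and would shortcut that later argument as well.
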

	\begin{proof}  Denote $\tilde{X}^n=(\tilde{X}^{n,1},...,\tilde{X}^{n,\ell})$. By It\^{o}
		$$
		\begin{aligned}
		&\tilde{X}^{n,k}_{t}=n^{1/2}\int_0^{\tilde{\tau}^n(X)\wedge t}(\nabla f_k(X_s)-\nabla f_k(x))\cdot \sigma(X_s)dW_s\\
		&+n^{1/2}\int_0^{\tilde{\tau}^n(X)\wedge t}\bigg((\nabla f_k(X_s)-\nabla f_k(x))\cdot \mu(X_s)+\frac{1}{2}\textrm{Tr}[\sigma(X_s)'H_{f_k}(X_s)\sigma(X_s)]\bigg)ds\\
		&=n^{1/2}\int_0^{\tilde{\tau}^n(X)\wedge t}c_k(X_s,x)\cdot dW_s+n^{1/2}\int_0^{\tilde{\tau}^n(X)\wedge t}b_k(X_s,x)ds
		\end{aligned}
		$$
		For any $t\geq 0$ we have by Lemma \ref{weak_conv_times}, Lemma \ref{vanish} and Slutsky's theorem that
		$$
		[\tilde{X}^{n,k},\tilde{X}^{n,k}]_{t}=(n\tilde{\tau}^n(X))\underbrace{\frac{1}{\tilde{\tau}^n(X)}\int_0^{\tilde{\tau}^n(X)\wedge t}\|c_k(X_s,x)\|^2ds}_{\to 0\textrm{ a.s.}}\,{\stackrel{\mathbf{P}}{\to}}\,0
		$$
		{as $n\to \infty$,} because $\|c_k(X_t,x)\|^2\to 0$ a.s. as $t\downarrow 0$. The drift terms converge in {u.c.p.} to zero. Indeed, for any $N \in \mathbb{N}$ we have
		$$
		\begin{aligned}
		&\sup_{t\leq N}\bigg|n^{1/2}\int_{0}^{\tilde{\tau}^n(X)\wedge t}b_k(X_s,x)ds\bigg|\\
		&\leq \frac{n\tilde{\tau}^n(X)}{n^{1/2}}\frac{1}{\tilde{\tau}^n(X)}\int_{0}^{\tilde{\tau}^n(X)\wedge N}|b_k(X_s,x)|ds\,{\stackrel{\mathbf{P}}{\to}}\,0
		\end{aligned}
		$$
		{as $n\to \infty$,} because $|b_k(X_t,x)|\to |\textrm{Tr}[\sigma(x)'H_{f_k}(x)\sigma(x)]|/2$ a.s. as $t\downarrow 0$. Since the sequence of martingale parts of $\tilde{X}^n$ converge in {u.c.p.} to $0$ by Lemma \ref{vanishing_martingales} and the drifts converge in {u.c.p.} to $0$, the claim follows.
	\end{proof}
	
	\begin{lemma}
		\label{fdd_base_langevin}	
		The sequence of processes $(n^{1/2}(X_{\tilde{\tau}^n(X)\wedge.}-x))_{n \in \mathbb{N}}$ converges in the sense of finite-dimensional distributions over $\{t_1,t_2,...,t_m\}\subset (0,\infty)$. In particular, for any $\{t_1,t_2,...,t_m\}\subset (0,\infty)$ and $\varphi:\mathbb{R}^{m\times d}\to \mathbb{R}$ continuous and bounded we have
		$$
		\begin{aligned}
		&|\mathbf{E}[\varphi(n^{1/2}(X_{\tilde{\tau}^n(X)\wedge t_1}-x),...,n^{1/2}(X_{\tilde{\tau}^n(X)\wedge t_m}-x))]\\
		&-\mathbf{E}[\varphi(\sigma(x)W_{\tau^1(\sigma(x)W)},...,\sigma(x)W_{\tau^1(\sigma(x)W)})]|\,{{\to}}\,0
		\end{aligned}
		$$
		{as $n\to \infty$.}
	\end{lemma}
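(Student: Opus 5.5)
The plan is to reduce everything to the stopped process at the exit time. Write $Z^n_t:=n^{1/2}(X_{\tilde{\tau}^n(X)\wedge t}-x)$. By continuity of paths we have $\|Z^n_{\tilde{\tau}^n(X)}\|=1$, and $Z^n_t=Z^n_{\tilde{\tau}^n(X)}$ whenever $t\geq \tilde{\tau}^n(X)$. Since every $t_i>0$ and $\tilde{\tau}^n(X)\downarrow 0$ a.s. by Lemma \ref{vanish}, we get
$$\mathbf{P}\big(Z^n_{t_i}\neq Z^n_{\tilde{\tau}^n(X)}\text{ for some } i\big)\leq \mathbf{P}\big(\tilde{\tau}^n(X)>\min_i t_i\big)\to 0 .$$
Hence
$$\big|\mathbf{E}[\varphi(Z^n_{t_1},\dots,Z^n_{t_m})]-\mathbf{E}[\varphi(Z^n_{\tilde{\tau}^n(X)},\dots,Z^n_{\tilde{\tau}^n(X)})]\big|\leq 2\|\varphi\|_\infty\,\mathbf{P}(\tilde{\tau}^n(X)>\min_i t_i)\to 0 .$$
It therefore suffices to show that $Z^n_{\tilde{\tau}^n(X)}\to^\textnormal{w}\sigma(x)W_{\tau^1(\sigma(x)W)}$. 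The desired statement then follows by applying the continuous bounded map $z\mapsto\varphi(z,\dots,z)$.

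For the exit position, I rewrite it in the time-scaled picture. Set $Y^n_t:=n^{1/2}(X_{t/n}-x)$. Then $n\tilde{\tau}^n(X)=\tau^1(Y^n)$, as in the proof of Lemma \ref{weak_conv_times}, and so $Z^n_{\tilde{\tau}^n(X)}=Y^n_{\tau^1(Y^n)}$. By Proposition \ref{Gerhold} with $f=\mathrm{id}$ we have $Y^n\Rightarrow\sigma(x)W$. Lemma \ref{conv_stopped_BM} with $\Sigma=\sigma(x)$ then gives
$$Y^n_{\tau^1(Y^n)\wedge .}\Rightarrow \sigma(x)W_{\tau^1(\sigma(x)W)\wedge .}.$$
The hypothesis $(\Sigma\Sigma')_{k,k}>0$ needed there holds by Assumption \ref{diffu}.

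The main obstacle is passing from weak convergence of the stopped paths to weak convergence of the value at the random time $\tau^1(Y^n)$, because evaluation at a random time is not a continuous functional. I would handle it with a large fixed time. Fix $T>0$ and write
$$Y^n_{\tau^1(Y^n)}=Y^n_{\tau^1(Y^n)\wedge T}\quad\text{on }\{\tau^1(Y^n)\leq T\}.$$
The projection at the fixed time $T$ is continuous at every continuous path, so by continuous mapping (Lemma \ref{Sk_and_lu_cont}) we get
$$Y^n_{\tau^1(Y^n)\wedge T}\to^\textnormal{w}\sigma(x)W_{\tau^1(\sigma(x)W)\wedge T}.$$
Next, Lemma \ref{weak_conv_times} (or Lemma \ref{conv_stopping_BM}) gives
$$\limsup_n\mathbf{P}(\tau^1(Y^n)>T)\leq \mathbf{P}(\tau^1(\sigma(x)W)\geq T),$$
which tends to $0$ as $T\to\infty$, because the exit time of the nondegenerate component from the unit ball is a.s. finite. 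The same bound holds for the limit process.

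Finally I would combine these for a bounded continuous $g$. Splitting on the events $\{\tau^1\leq T\}$ gives
$$\limsup_n\big|\mathbf{E}[g(Y^n_{\tau^1(Y^n)})]-\mathbf{E}[g(\sigma(x)W_{\tau^1(\sigma(x)W)})]\big|\leq 4\|g\|_\infty\,\mathbf{P}(\tau^1(\sigma(x)W)\geq T).$$
Letting $T\to\infty$ finishes the argument.
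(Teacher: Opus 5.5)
Your proposal is correct and follows essentially the same route as the paper. You reduce to the exit position using $\tilde{\tau}^n(X)\downarrow 0$ and rewrite it as $Y^n_{\tau^1(Y^n)}$ for the time-scaled process; you then invoke Lemma \ref{conv_stopped_BM}, truncate at a large fixed time $T$, and use $\tau^1(Y^n)\to^{\textnormal{w}}\tau^1(\sigma(x)W)$ together with a.s. finiteness of the limiting exit time. You make explicit the continuity of the time-$T$ projection and the Portmanteau bound $\limsup_n\mathbf{P}(\tau^1(Y^n)>T)\leq\mathbf{P}(\tau^1(\sigma(x)W)\geq T)$, both of which the paper uses only implicitly.
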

	\begin{proof} For any $0<t_1<...<t_m$ and $\varphi:\mathbb{R}^{m\times d
		}\to \mathbb{R}$ continuous and bounded we have 
		$$
		\begin{aligned}
		&|\mathbf{E}[\varphi(n^{1/2}(X_{\tilde{\tau}^n(X)\wedge t_1}-x),...,n^{1/2}(X_{\tilde{\tau}^n(X)\wedge t_m}-x))]\\
		&-\mathbf{E}[\varphi(n^{1/2}(X_{\tilde{\tau}^n(X)}-x),...,n^{1/2}(X_{\tilde{\tau}^n(X)}-x))]|\\
		&\leq 2\|\varphi\|_\infty \mathbf{P}(\tilde{\tau}^n(X)\geq t_1)\,{{\to}}\,0
		\end{aligned}
		$$
		{as $n\to \infty$,} because $\tilde{\tau}^n(X)\downarrow 0$ a.s. Define $Z^n:=n^{1/2}(X_{./n}-x)$ and recall $\tau^1(Z^n)=n\tilde{\tau}^n(X)$. We have that $Z^n_{\tau^1(Z^n)\wedge .}\Rightarrow \sigma(x)W_{\tau^1(\sigma(x)W)\wedge .}$ by Lemma \ref{conv_stopped_BM} (thus, also in the {finite-dimensional distributions} sense since the limit process has continuous paths). Also note
		$$
		n^{1/2}(X_{\tilde{\tau}^n(X)}-x)=n^{1/2}(X_{(n\tilde{\tau}^n(X))/n}-x)=Z_{\tau^1(Z^n)}^n
		$$
		Denote $Z:=\sigma(x)W$ for ease of notation. Now, for any $a>0$ and $\varphi:\mathbb{R}^d\to \mathbb{R}$ continuous and bounded we have
		$$
		\begin{aligned}&|\mathbf{E}[\varphi(Z_{\tau^1(Z^n)}^n)]-\mathbf{E}[\varphi(Z_{\tau^1(Z)})]|\\
		&\leq 2\|\varphi\|_\infty \mathbf{P}(\tau^1(Z^n)\geq a)+|\mathbf{E}[\varphi(Z_{\tau^1(Z^n)\wedge a}^n)]-\mathbf{E}[\varphi(Z_{\tau^1(Z)\wedge a})]|\\
		&+|\mathbf{E}[\varphi(Z_{\tau^1(Z)\wedge a})]-\mathbf{E}[\varphi(Z_{\tau^1(Z)})]|\\
		\end{aligned}
		$$
		Since $\tau^1(Z^n)\to^\textnormal{w}\tau^1(Z)$ as $n \to \infty$, we get
		$$
		\begin{aligned}\limsup_{n\to \infty}|\mathbf{E}[\varphi(Z_{\tau^1(Z^n)}^n)]-\mathbf{E}[\varphi(Z_{\tau^1(Z)})]|&\leq 2\|\varphi\|_\infty \mathbf{P}(\tau^1(Z)\geq a)\\
		&+|\mathbf{E}[\varphi(Z_{\tau^1(Z)\wedge a})]-\mathbf{E}[\varphi(Z_{\tau^1(Z)})]|
		\end{aligned}
		$$
		Since $\tau^1(Z)$ is a.s. finite and so $Z_{\tau^1(Z)\wedge a}\to Z_{\tau^1(Z)}$ a.s., we conclude by sending $a\to \infty$, and the claim follows.
	\end{proof}
	
	We can now prove Theorem \ref{stopped_langevin_fdd_NEW}.
	
	\begin{proof}[Proof of Theorem \ref{stopped_langevin_fdd_NEW}]
		The first claim follows from Slutsky's theorem by combining Lemma \ref{fdd_base_langevin} and Proposition \ref{vanishing_times_jacob_diffusion}.  For the second claim, suppose the sequence of processes converges weakly to $(J_f(x)\sigma(x)W_{\tau^1(\sigma(x)W)})_{t\geq 0}$. Then, the coordinate projections at $t=0$ would converge in distribution to $J_f(x)\sigma(x)W_{\tau^1(\sigma(x)W)}$. That is, for any $\varphi:\mathbb{R}^\ell\to \mathbb{R}$ we would have
		$$\varphi(0)=\mathbf{E}[\varphi(n^{1/2}(f(X_{\tilde{\tau}^n(X)\wedge 0})-f(x)))]\,{{\to}}\,\mathbf{E}[\varphi(J_f(x)\sigma(x)W_{\tau^1(\sigma(x)W)})]$$
		{as $n\to \infty$,} which would imply that $J_f(x)\sigma(x)W_{\tau^1(\sigma(x)W)}\sim \delta_0$, but this is false in general.
	\end{proof}
	
	\subsection{Convergence in distribution at the first exit time}
	
	We start out with a well-known necessary and sufficient condition for convergence in distribution for $\mathbb{R}^d$-valued random variables; this will allow us to prove a lemma (Lemma \ref{martingale_surface_invariance_langevin_NEW}) instrumental to proving Theorem \ref{th1_NEW}.
	
	\begin{lemma}[(\cite{Klenke}; Th.13.16 p. 254)] 
		\label{weak_lipschitz_portmanteau}	
		Let $(Y^n)_{n \in \mathbb{N}},Y$ be $\mathbb{R}^d$-valued random variables. Then $Y^n\to^\textnormal{w} Y$ iff $\mathbf{E}[\varphi(Y^n)]\to \mathbf{E}[\varphi(Y)]$ for all bounded {Lipschitz continuous} functions $\varphi$.
	\end{lemma}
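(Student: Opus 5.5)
The statement is the classical characterisation of convergence in distribution by bounded Lipschitz test functions (\cite{Klenke}; Th.13.16). The plan is to prove both directions directly and to reduce the nontrivial one to the Portmanteau characterisation through closed sets.

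The direction ``$\Rightarrow$'' is immediate. Every bounded Lipschitz function is bounded and continuous, so $Y^n\to^\textnormal{w} Y$ gives $\mathbf{E}[\varphi(Y^n)]\to\mathbf{E}[\varphi(Y)]$ by definition.

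For ``$\Leftarrow$'' I will establish the Portmanteau closed-set condition
$$\limsup_{n\to\infty}\mathbf{P}(Y^n\in F)\leq \mathbf{P}(Y\in F)\quad\text{for every closed } F\subset\mathbb{R}^d.$$
Given a closed set $F$ and $\varepsilon>0$, set
$$\varphi_\varepsilon(y):=\Big(1-\tfrac{1}{\varepsilon}\textrm{dist}(y,F)\Big)^+ .$$
This function takes values in $[0,1]$ and is $1/\varepsilon$-Lipschitz, because $y\mapsto \textrm{dist}(y,F)$ is $1$-Lipschitz. It satisfies $\mathbf{1}_F\leq\varphi_\varepsilon\leq \mathbf{1}_{F^\varepsilon}$, where $F^\varepsilon:=\{y:\textrm{dist}(y,F)<\varepsilon\}$.

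By hypothesis,
$$\limsup_{n}\mathbf{P}(Y^n\in F)\leq \lim_n\mathbf{E}[\varphi_\varepsilon(Y^n)]=\mathbf{E}[\varphi_\varepsilon(Y)]\leq \mathbf{P}(Y\in F^\varepsilon).$$
Since $F$ is closed, the sets $F^\varepsilon$ decrease to $F$ as $\varepsilon\downarrow 0$. Continuity of measure then gives the closed-set inequality.

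Next, taking complements yields $\liminf_n\mathbf{P}(Y^n\in G)\geq\mathbf{P}(Y\in G)$ for every open $G$. The remaining step is to pass from these two inequalities to $\mathbf{E}[\varphi(Y^n)]\to\mathbf{E}[\varphi(Y)]$ for an arbitrary bounded continuous $\varphi$. Replacing $\varphi$ by an affine transform, I may assume $0<\varphi<1$. I will use the layer-cake formula
$$\mathbf{E}[\varphi(Y^n)]=\int_0^1\mathbf{P}(\varphi(Y^n)>s)\,ds .$$
Each set $\{\varphi>s\}$ is open, so the open-set inequality applies to it. Fatou's lemma then gives
$$\liminf_n\mathbf{E}[\varphi(Y^n)]\geq\int_0^1\mathbf{P}(\varphi(Y)>s)\,ds=\mathbf{E}[\varphi(Y)].$$
Applying the same argument to $1-\varphi$ gives the matching upper bound, which concludes the proof.

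The main point of care is this last passage from sets to general bounded continuous functions. Using Fatou's lemma on the layer-cake integrals avoids any need for a uniformity argument.
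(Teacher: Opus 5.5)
Your proof is correct and complete. The paper does not prove this lemma itself; it only cites the Portmanteau theorem in Klenke, so the natural comparison is with that textbook proof.

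Your first half is exactly the step used there. You pass from bounded Lipschitz test functions to $\limsup_n \mathbf{P}(Y^n\in F)\leq \mathbf{P}(Y\in F)$ for closed $F$ using the bumps $(1-\varepsilon^{-1}\mathrm{dist}(\cdot,F))^+$.

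The two arguments differ in how they return to a general bounded continuous $\varphi$. The cited proof first deduces $\mathbf{P}(Y^n\in A)\to\mathbf{P}(Y\in A)$ for continuity sets $A$, i.e. those with $\mathbf{P}(Y\in\partial A)=0$. It then approximates $\varphi$ by step functions whose cut levels avoid the at most countably many atoms of the law of $\varphi(Y)$. You instead go straight from the open-set inequality to $\varphi$, using the layer-cake formula and Fatou's lemma applied to both $\varphi$ and $1-\varphi$.

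Your route is shorter and needs no choice of non-atomic levels. The textbook detour is longer but yields stronger characterisations along the way: convergence on continuity sets, and convergence of $\mathbf{E}[\varphi(Y^n)]$ for bounded measurable $\varphi$ whose set of discontinuity points $D_\varphi$ satisfies $\mathbf{P}(Y\in D_\varphi)=0$. It is also written for sub-probability measures, where convergence of the total mass must be tracked separately. For how the lemma is used in this paper, the equivalence you prove is all that is needed.
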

	
	\begin{lemma}
		\label{martingale_surface_invariance_langevin_NEW}	
		For each $n \in \mathbb{N}$ and $1\leq k \leq \ell$, the processes
		$$
		F^{n,k}_{a}:=n^{1/2}\int_0^{\tilde{\tau}^n(X)\wedge a}\nabla f_k(X_s)\cdot \sigma(X_s)dW_s\,{\stackrel{\textnormal{a.s. and }L^1}{\to}}\, V^{n,k}
		$$
		as $a \to \infty$. We denote $F^n_a=(F^{n,1}_a,...,F^{n,\ell}_a)$ and $V^n=(V^{n,1},...,V^{n,\ell})$. We have
		$$
		V^{n}\,{\stackrel{\textnormal{w}}{\to}}\,  J_f(x)\sigma(x)W_{\tau^1(\sigma(x)W)}
		$$
		{as $n\to \infty$.}
	\end{lemma}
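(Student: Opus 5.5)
We first show, for fixed $n$ and $k$, that $F^{n,k}_a$ converges a.s. and in $L^1$ as $a\to\infty$. $F^{n,k}$ is a continuous local martingale, stopped at $\tilde{\tau}^n(X)$. Its quadratic variation satisfies
$$
[F^{n,k}]_\infty = n\int_0^{\tilde{\tau}^n(X)}\|\sigma(X_s)'\nabla f_k(X_s)\|^2ds\leq n\,C_n\,\tilde{\tau}^n(X),
$$
where $C_n:=\sup_{\|y-x\|\leq n^{-1/2}}\|\sigma(y)'\nabla f_k(y)\|^2<\infty$ by continuity on a compact ball. As already used in the proof of Lemma \ref{local}, Assumption \ref{diffu} gives $\mathbf{E}[\tilde{\tau}^1(X)]<\infty$, and since $\tilde{\tau}^n(X)\leq \tilde{\tau}^1(X)$ we get $\mathbf{E}[\tilde{\tau}^n(X)]<\infty$. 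Hence $\mathbf{E}[F^{n,k}]_\infty<\infty$, so $F^{n,k}$ is an $L^2$-bounded martingale. The martingale convergence theorem then gives convergence a.s. and in $L^2$, hence in $L^1$. Moreover $V^{n,k}=F^{n,k}_{\tilde{\tau}^n(X)}$, since $F^{n,k}$ is constant after $\tilde{\tau}^n(X)$, which is a.s. finite.

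To identify the limit, we write by It\^{o}
$$
n^{1/2}(f_k(X_{\tilde{\tau}^n(X)})-f_k(x)) = V^{n,k} + n^{1/2}\int_0^{\tilde{\tau}^n(X)}\Big(\nabla f_k(X_s)\cdot\mu(X_s)+\tfrac12\mathrm{Tr}[\sigma'H_{f_k}\sigma](X_s)\Big)ds .
$$
The drift term is bounded by $(n\tilde{\tau}^n(X))\,n^{-1/2}\,\sup_{\|y-x\|\leq 1}|\cdot|$. Since $n\tilde{\tau}^n(X)$ is tight by Lemma \ref{weak_conv_times}, this term tends to $0$ in probability. Therefore, by Slutsky, it suffices to show
$$
n^{1/2}(f(X_{\tilde{\tau}^n(X)})-f(x))\to^\textnormal{w} J_f(x)\sigma(x)W_{\tau^1(\sigma(x)W)}.
$$
We split the left-hand side as
$$
n^{1/2}(f(X_{\tilde{\tau}^n(X)})-f(x)) = J_f(x)\,n^{1/2}(X_{\tilde{\tau}^n(X)}-x) + \big(\text{Taylor remainder}\big).
$$
In the proof of Lemma \ref{fdd_base_langevin} it was shown that
$$
n^{1/2}(X_{\tilde{\tau}^n(X)}-x)=Z^n_{\tau^1(Z^n)}\to^\textnormal{w}\sigma(x)W_{\tau^1(\sigma(x)W)},
$$
and multiplication by $J_f(x)$ is continuous, which handles the first term. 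For the remainder, we evaluate $\tilde{X}^n$ of Proposition \ref{vanishing_times_jacob_diffusion} at the time $\tilde{\tau}^n(X)$. Even more directly, a deterministic bound suffices:
$$
\|f(y)-f(x)-J_f(x)(y-x)\|\leq \omega(\|y-x\|)\|y-x\|,
$$
where $\omega(r)\to0$ as $r\to 0$ is the modulus of continuity of $J_f$ on the unit ball. At $y=X_{\tilde{\tau}^n(X)}$ we have $\|y-x\|=n^{-1/2}$, so the remainder after scaling by $n^{1/2}$ is at most $\omega(n^{-1/2})\to 0$ deterministically. Slutsky then concludes.

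The main obstacle is the first part, namely establishing uniform integrability of $F^{n,k}$ so that the a.s. limit is also an $L^1$ limit and equals the stopped value. This rests on the finiteness of $\mathbf{E}[\tilde{\tau}^n(X)]$ under Assumption \ref{diffu}. If that fact is not taken as given from the proof of Lemma \ref{local}, I would prove it directly. The idea is to apply It\^{o} to $g(X_{\cdot\wedge\tilde{\tau}^1(X)})$ with $g(y)=e^{\lambda(y_k-x_k)}$, for $k$ as in Assumption \ref{diffu} and $\lambda$ large. The generator applied to $g$ is then bounded below by a positive constant on the ball, since $\lambda^2(\sigma\sigma')_{k,k}/2$ dominates $\lambda\mu_k$. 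Dynkin's formula combined with boundedness of $g$ on the ball then gives $\mathbf{E}[\tilde{\tau}^1(X)]<\infty$.
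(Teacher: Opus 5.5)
Your proof is correct. The first part is the paper's argument: $F^{n,k}$ is an $L^2$-bounded martingale because $\mathbf{E}[\tilde{\tau}^n(X)]\le \mathbf{E}[\tilde{\tau}^1(X)]<\infty$. Your Dynkin argument with $g(y)=e^{\lambda(y_k-x_k)}$ is a valid proof of $\mathbf{E}[\tilde{\tau}^1(X)]<\infty$, which the paper only asserts.

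For the weak limit of $V^n$ you take a genuinely different route.

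\emph{The paper's route.} It works at a fixed horizon $a$. Since $F^n_a=V^n$ on $\{\tilde{\tau}^n(X)\le a\}$, it gets $|\mathbf{E}[\varphi(F^n_a)]-\mathbf{E}[\varphi(V^n)]|\le 2\|\varphi\|_\infty\mathbf{P}(\tilde{\tau}^1(X)>a)$ uniformly in $n$. It then obtains $F^n_a\stackrel{\textnormal{w}}{\to}J_f(x)\sigma(x)W_{\tau^1(\sigma(x)W)}$ from Theorem \ref{stopped_langevin_fdd_NEW} at the single time $a$, together with a vanishing drift. Theorem \ref{stopped_langevin_fdd_NEW} in turn rests on Proposition \ref{vanishing_times_jacob_diffusion}, that is, on the quadratic-variation estimate and the tightness machinery of Lemma \ref{vanishing_martingales}.

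\emph{Your route.} You go straight to the exit time, where $\|X_{\tilde{\tau}^n(X)}-x\|=n^{-1/2}$ exactly. So the scaled Taylor remainder is at most $\omega(n^{-1/2})$ deterministically. The linear part is $J_f(x)Z^n_{\tau^1(Z^n)}$, whose convergence is established inside the proof of Lemma \ref{fdd_base_langevin}.

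\emph{What each buys.} Your argument is more elementary and bypasses Theorem \ref{stopped_langevin_fdd_NEW}, Proposition \ref{vanishing_times_jacob_diffusion} and Lemma \ref{vanishing_martingales} entirely. In effect you prove the first claim of Theorem \ref{th1_NEW} directly and read the lemma off it. This shows the detour through $V^n$ is not actually needed for that theorem. Your bound even gives $\sup_{t\ge 0}\|\tilde{X}^n_t\|\le\omega(n^{-1/2})$ almost surely, because the stopped process never leaves the closed ball of radius $n^{-1/2}$. So the same one-line estimate would also replace the stochastic proof of Proposition \ref{vanishing_times_jacob_diffusion}. The paper's route buys mainly structure: it reuses the fixed-time result already proved and keeps the order Theorem \ref{stopped_langevin_fdd_NEW} $\Rightarrow$ Lemma $\Rightarrow$ Theorem \ref{th1_NEW}. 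Yours leans on an intermediate step of a proof rather than a stated result, which is fine since you cite it explicitly.
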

	\begin{proof} For each $n$ and $k$ fixed, the process $(F^{n,k}_{a})_{a \in \mathbb{N}}$ is an $L^2$-martingale. Let $M_{\nabla,k}:=\sup_{\{z:\|z-x\|< 1\}}\|\nabla f_k(z) \sigma(z)\|$. We have by monotone convergence
		$$
		\begin{aligned}
		\sup_{a \in \mathbb{N}}\mathbf{E}[|F^{n,k}_{a}|^2]&=\sup_{a \in \mathbb{N}}n\mathbf{E}\bigg[\int_0^{\tilde{\tau}^n(X)\wedge a}\|\nabla f_k(X_s)\sigma(X_s)\|^2ds\bigg]\\
		&\leq M_{\nabla,k}^2n\mathbf{E}[\tilde{\tau}^n(X)]<\infty
		\end{aligned}
		$$
		where the last inequality follows because Assumption \ref{diffu} holds and $\mathbf{E}[\tilde{\tau}^n(X)]\leq \mathbf{E}[\tilde{\tau}^1(X)]$ for all $n$. So, for each fixed $n \in \mathbb{N},k\in \{1,...,\ell\}$, the martingales $(F^{n,k}_{a})_{a \in \mathbb{N}}$ are uniformly integrable by de la Vall\'{e}e-Poussin criterion (see e.g. (\cite{Schilling_MIM}; Th.22.9(x) p. 266)). Therefore the first claim follows by martingale convergence. Let $\varphi:\mathbb{R}^\ell\to \mathbb{R}$ be bounded and {Lipschitz continuous} with Lipschitz constant $K_\varphi$. By the above it follows that $|\mathbf{E}[\varphi(F_a^n)]-\mathbf{E}[\varphi(V^n)]|\leq K_\varphi \mathbf{E}[\|F_a^n-V^n\|]{\to}0$ as $a\to \infty$. Alternatively, and more straightforwardly, note that $|\mathbf{E}[\varphi(F_a^n)]-\mathbf{E}[\varphi(V^n)]|\leq 2\|\varphi\|_\infty P(\tilde{\tau}^n(X)>a)\leq 2\|\varphi\|_\infty P(\tilde{\tau}^1(X)>a)$ (uniformly in $n$). Let $\varepsilon >0$. For all $a$ sufficiently large:
		$$\begin{aligned}
		&|\mathbf{E}[\varphi(V^n)]-\mathbf{E}[\varphi(J_f(x) \sigma(x)W_{\tau^1(\sigma(x)W)})]|\\
		&\leq \varepsilon + |\mathbf{E}[\varphi(F^n_a)]-\mathbf{E}[\varphi(J_f(x) \sigma(x)W_{\tau^1(\sigma(x)W)})]|
		\end{aligned}
		$$
		Recall that we have that
		$$
		n^{1/2}(f(X_{\tilde{\tau}^n(X)\wedge a})-f(x))=n^{1/2}\int_{0}^{\tilde{\tau}^n(X)\wedge a}\tilde{b}(X_u)du+F^{n}_{a}
		$$
		where $\tilde{b}=(\tilde{b}_1,...,\tilde{b}_\ell)$ similarly as in the notation of (the proof of) Proposition \ref{vanishing_times_jacob_diffusion}. The drift terms converge in probability to $0$ as $n\to \infty$; by Theorem \ref{stopped_langevin_fdd_NEW}  and Slutsky's theorem we have that $F^n_{a}\to^{\textnormal{w}}J_f(x)\sigma(x)W_{\tau^1(\sigma(x)W)}$ as $n\to \infty$. So 
		{$$\limsup_{n\to \infty}|\mathbf{E}[\varphi(V^n)]-\mathbf{E}[\varphi(J_f(x) \sigma(x)W_{\tau^1(\sigma(x)W)})]|\leq \varepsilon$$}
		and since $\varepsilon >0$ was arbitrary, the claim follows by Lemma \ref{weak_lipschitz_portmanteau}.
	\end{proof}
	
	We can now prove our second result, Theorem \ref{th1_NEW}.
	
	\begin{proof}[Proof of Theorem \ref{th1_NEW}] 
		We again use the notation of Lemma \ref{martingale_surface_invariance_langevin_NEW} with $\tilde{b}=(\tilde{b}_1,...,\tilde{b}_\ell)$. By continuous mapping and the first result in Lemma \ref{martingale_surface_invariance_langevin_NEW} we have a.s.:
		$$
		\begin{aligned}
		n^{1/2}(f(X_{\tilde{\tau}^n(X)})-f(x))&=\lim_{a\to \infty}n^{1/2}(f(X_{\tilde{\tau}^n(X)\wedge a})-f(x))\\
		&=n^{1/2}\int_{0}^{\tilde{\tau}^n(X)}\tilde{b}(X_u)du+V^n
		\end{aligned}
		$$
		The first term on the {right-hand side} converges in probability to $0$ as $n\to \infty$. Therefore, by Slutsky's theorem and the second result in Lemma \ref{martingale_surface_invariance_langevin_NEW} we conclude for the first claim. The second claim follows from the fact that if $O$ is a rotation matrix then  by rotational invariance of Brownian motion we have $OW_{\tau^1(OW)} \sim \varpi(dz)$. 
	\end{proof}

\end{document}